\documentclass[12pt]{amsart}
\usepackage{amsmath,amssymb,amsthm}
\usepackage{mathtools}
\usepackage{enumitem}
\usepackage{hyperref}
\usepackage{subcaption}
\usepackage{IEEEtrantools}
\usepackage{cleveref}
\usepackage[T1]{fontenc}
\usepackage[utf8]{inputenc}
\usepackage{microtype}
\usepackage{tikz}
\usetikzlibrary{arrows.meta, positioning, calc}

\newtheorem{theorem}{Theorem}[section]
\newtheorem{lemma}[theorem]{Lemma}
\newtheorem{proposition}[theorem]{Proposition}
\newtheorem{corollary}[theorem]{Corollary}

\theoremstyle{definition}

\newtheorem{remark}[theorem]{Remark}

\newcommand{\OtG}{O_t(G)}

\newcommand{\OAztG}{O^{A_0}_t(G)}
\newcommand{\OAztK}{O^{A_0}_t(K)}

\newcommand{\RAzp}{R^{A_0}_+}
\newcommand{\RAzm}{R^{A_0}_-}
\newcommand{\Ut}{U_t(\mathfrak{g})}

\newcommand{\Aform}{A_0}
\newcommand{\Qpt}{\mathbb{Q}(t)}
\newcommand{\g}{\mathfrak{g}}
\newcommand{\Pp}{P^+}
\newcommand{\wt}{\mathrm{wt}}

\newcommand{\CpKo}{C(K_0)}

\newcommand{\HSoi}{\mathcal{H}_{\mathrm{Soi}}}
\newcommand{\psiqSoi}{\psi^{(q)}_{\mathrm{Soi}}}

\title[Triangular Decomposition of the Crystal Lattice Revisited]{%
  Triangular Decomposition of the Crystal Lattice\\
  of Quantized Function Algebras: Revisited}

\author{Ayan Dey}
\address{Indian Statistical Institute, Delhi, India}
\email{studentayandey@gmail.com, ayan22r@isid.ac.in}

\subjclass[2020]{%
  Primary: 17B37, 20G42;
  Secondary: 46L67, 81R50}

\keywords{%
  Crystal bases,
  Quantized function algebras,
  Triangular decomposition,
  Crystallization,
  Exceptional Lie algebras}

\begin{document}

\begin{abstract}
Let $\g$ be a simple complex Lie algebra of type $G_2$, $F_4$, or $E_8$, and let $G$ be
the unique connected simply connected complex Lie group with $\mathrm{Lie}(G)=\g$ and compact real form $K$.
We prove a triangular decomposition theorem for the lower crystal lattice
$\OAztG$ of the quantized function algebra $\OtG$, establishing that
$\OAztG=A_0\text{-alg}<\RAzp \cup \RAzm>.$
This extends the triangular decomposition recently obtained for types
$A_n, B_n, C_n, D_n, E_6$, and $E_7$ in~\cite{DDPa} to all simple complex Lie algebras.
As a consequence, we obtain: (i) the inclusion
$\OAztG\subseteq\OAztK$ conjectured by Matassa-Yuncken and
(ii) the crystal limit $\CpKo$ is a compact quantum
semigroup with a unique bi-invariant (Haar) state.
\end{abstract}

\maketitle

\section{Introduction}

The theory of crystal bases, introduced by Kashiwara~\cite{Kas90,Kas91,Kas93},
provides a powerful combinatorial tool for the study of representations of
quantized universal enveloping algebras.
A parallel and equally important object is the quantized function algebra
$\OtG$, whose crystal lattice $\OAztG$ captures the \emph{crystallization}
of the algebra of functions on the Lie group $G$ as the deformation parameter
$t\to 0$.

The notion of crystallization of a quantized function algebra on the $C^*$-algebraic context was first
introduced by Giri and Pal~\cite{GP24} for type $A_n$, and subsequently
extended to all complex semisimple Lie algebras by Matassa and
Yuncken~\cite{MY23}, in a remarkable way, by exploiting the theory of crystal bases to construct
the crystallized algebras as higher-rank graph $C^*$-algebras.
The rank of these graph algebras equals the rank of the underlying Lie algebra, an outstanding structural result.

One of the central structural questions in the theory is the
\emph{triangular decomposition} of the crystal lattice, which asserts that
\begin{equation}\label{eq:tridecomp}
  \OAztG=A_0\text{-alg}<\RAzp \cup \RAzm>.
\end{equation}
Here $\RAzp$ and $\RAzm$ are the $\Aform$-subalgebras of $\OAztG$ defined in \Cref{+- algebras}, and these serve as crystal analogs of the positive
and negative parts in the triangular decomposition of the quantized function algebra.

In the companion paper~\cite{DDPa}, Das, Dey, and Pal established
\eqref{eq:tridecomp} for all simple complex Lie algebras of types
$A_n, B_n, C_n, D_n, E_6$, and $E_7$.
The argument in~\cite{DDPa} relies crucially on the existence of a
\emph{minuscule} dominant weight as a tensor generator for the representation
ring.
For types $G_2$, $F_4$, and $E_8$, no minuscule dominant weight exists;
the tensor generator is instead the \emph{quasi-minuscule} fundamental
weight $\varpi_i$ (with $i=1, 4, 8$ for $G_2, F_4, E_8$ respectively).
The quasi-minuscule representation has a non-trivial weight space of weight 0,
and this is precisely where the argument of~\cite{DDPa} breaks down for these three exceptional types.

The present article fills this gap.
The key new insight is Lemma \ref{lem:quasi_min_crystal}, which gives a detailed
description of the weight-zero nodes in the connected component of
highest weight $\varpi_i$ inside the tensor product crystal
$B(\varpi_i)\otimes B(\varpi_i)$.
We show that each such node is of the form $b'\otimes c'$ where
$-\wt(b')=\wt(c')<0$, which allows us to bypass the obstruction
identified in~\cite{DDPa}.\\
A priori, the crystal analog of triangular decomposition may seem like an isolated result in crystal basis theory, but in the final section, we also indicate how this completely Lie theoretic result can be used to answer several operator algebraic questions related to the crystallized algebra of Matassa and Yuncken.

\subsection*{Main results}
Let $G$ be any connected simply connected complex Lie group $G$ with complex simple Lie algebra $\mathfrak{g}$ and compact real form $K$.
\begin{enumerate}[label=(\roman*)]
\item
  \textbf{Triangular decomposition} (Theorem \ref{thm:tridecomp_exceptional}):
   The crystal lattice satisfies
  $\OAztG= A_0\text{-alg}<\RAzp \cup \RAzm>$.
\item
  \textbf{The inclusion}
  (Corollary \ref{cor:MY_conjecture}):
  $\OAztG \subseteq \OAztK$.
\item
  \textbf{Compact quantum semigroup structure}
  (Corollary \ref{cor:compact_qsg}):
  The crystal limit $\CpKo$ is a compact quantum semigroup with a unique bi-invariant (Haar) state.
\end{enumerate}
\subsection*{Organization}

\Cref{sec:prelim} fixes notation and recalls the necessary background on
crystal bases, the crystal lattice of $\OtG$, and the spaces $\RAzp$,
$\RAzm$.
\Cref{sec:quasi_min_crystal} proves the key crystal-theoretic lemma on the
quasi-minuscule crystal.
\Cref{sec:main} contains the proof of the triangular decomposition for the
exceptional types.
\Cref{sec:consequences} states the consequences regarding the inclusion $\OAztG \subseteq \OAztK$ and the compact quantum semigroup structure with a Haar state.

\section{Prerequisites}\label{sec:prelim}

\subsection{Lie-theoretic conventions}

Throughout, $\g$ denotes a simple Lie algebra of rank $n$ with
Cartan matrix $(a_{i,j})$ and symmetrizer $\mathrm{diag}(d_1,\dots,d_n)$,
where each $d_i=\tfrac{1}{2}(\alpha_i,\alpha_i)$ is a positive integer and $d_i\in \{1,2,3\}$. Let $\alpha_1,\dots,\alpha_n$ be the simple roots,
$\alpha_1^\vee,\dots,\alpha_n^\vee$ the coroots, and
$\varpi_1,\dots,\varpi_n$ the fundamental weights.
The set of dominant integral weights is $\Pp$; we write $\rho$ for the
sum of fundamental weights.
For $\mu=\sum_i m_i\alpha_i$, set $K_\mu = K_1^{m_1}\cdots K_n^{m_n}$.

\subsection{The quantized enveloping algebra}

The quantized universal enveloping algebra $\Ut$ is the $\Qpt$-algebra
generated by $K_i, K_i^{-1}, E_i, F_i$ ($1\le i\le n$) subject to the
standard relations; see~[\cite{DDPa}] for the
precise form we use.
The Hopf structure and compact real form are as in~\cite[Section~2.1]{DDPa}.
We work over the base field $\Qpt$ (rather than $\mathbb{C}(t)$) so that
the results of~\cite{Jan-1996aa, Joseph95, Kas91} apply directly.

\subsection{Crystal bases and global bases}

We use $\Aform = \{f(t)/g(t)\in\Qpt : g(0)\ne 0\}$ for the localization
of $\mathbb{Q}[t]$ at zero.

For each $\Lambda\in\Pp$, let $V(\Lambda)$ be the irreducible highest
weight $\Ut$-module with highest weight vector $v^\Lambda_1$ and $m_\Lambda:=\text{dim}_{\mathbb{Q}(t)}V(\Lambda)$.
We fix a crystal basis $(L(\Lambda), B(\Lambda))$, polarization $(\cdot,\cdot)$,
and global crystal basis or canonical basis $\{v^\Lambda_i : 1\le i\le \dim\Lambda\}$.
The crystal basis
$B(\Lambda)$ then consists of the elements $\{b^\Lambda_i:=v^\Lambda_i + tL(\Lambda): 1\le i\le \dim\Lambda\}$. Subsequently, the functional $(v^\Lambda_i, -)$ on $V(\Lambda)$ is denoted by $(v^\Lambda_i)^*$. 

\subsection{The quantized function algebra and its crystal lattice}

The quantized function algebra $\OtG$ is the $\Qpt$-span of all matrix
elements
\[
  C^\Lambda_{f,v}(a) = \langle f, av\rangle,
  \quad \Lambda\in\Pp,\; f\in V(\Lambda)^*,\; v\in V(\Lambda),\; a\in\Ut.
\]
It carries a $\Ut\otimes\Ut$-module structure via left and right regular
actions, yielding the direct sum decomposition
\[
  \OtG \;\cong\; \bigoplus_{\Lambda\in\Pp} V(\Lambda)\otimes V(\Lambda).
\]
The \emph{lower crystal lattice} is the $\Aform$-submodule
\[
  \OAztG \;\cong\; \bigoplus_{\Lambda\in\Pp}
  L(\Lambda)\otimes_{\Aform} L(\Lambda),
\]
which in terms of the global basis reads
\[
  \OAztG \;=\; \Aform\text{-span of }
  \bigl\{C^\Lambda_{(v^\Lambda_i)^*,v^\Lambda_j} :
  \Lambda\in\Pp,\; 1\le i,j\le \dim\Lambda\bigr\}.
\]
We write $C^\Lambda_{i,j}$ for $C^\Lambda_{(v^\Lambda_i)^*,v^\Lambda_j}$.

\subsection{The algebras $\RAzp$ and $\RAzm$}
\label{+- algebras}

Following~\cite[Section~3.1]{DDPa} we define:
\begin{align*}
  \RAzp := \Aform\text{-span of }
    \bigl\{C^\Lambda_{i,1} : \Lambda\in\Pp,\;1\le i\le \dim\Lambda\bigr\}, \\
  \RAzm := \Aform\text{-span of }
    \bigl\{C^\Lambda_{i,m_\Lambda} :
    \Lambda\in\Pp,\;1\le i\le \dim\Lambda\bigr\},\\
  \tilde{R}^{A_0}_- := \Aform\text{-span of }
    \bigl\{t^{(w_0\Lambda-\wt(v^{\Lambda}_i),\;\rho)}C^\Lambda_{i,m_\Lambda} :
    \Lambda\in\Pp,\;1\le i\le \dim\Lambda\bigr\}.
\end{align*}
where $v^\Lambda_1$, $v^\Lambda_{m_\Lambda}$
are the highest and lowest weight global basis vectors of weights $\Lambda$ and $w_0.\Lambda$, respectively. In fact, it also follows that $\RAzp$ and $\RAzm$ are $\Aform$-algebras.\\
The $\Aform$-subalgebra $\OAztK$ of $\OtG$ is defined to be the $\Aform$-algebra generated
by $R^{A_0}_+$ and $(R^{A_0}_+)^*$. Therefore, the conjectured inclusion $\OAztG\subseteq\OAztK$
can be obtained from the triangular decomposition \eqref{eq:tridecomp} together
with the $\ast$-structure relation $(R^{A_0}_+)^* = \tilde{R}^{A_0}_-$ and ${R}^{A_0}_-\subseteq \tilde{R}^{A_0}_-$.

\subsection{The quasi-minuscule representations}\label{subsec:quasimin}

A non-zero dominant weight $\Omega\in\Pp$ is \emph{minuscule} if every weight of
$V(\Omega)$ lies in the Weyl group orbit of $\Omega$.
It is \emph{quasi-minuscule} if the only weight of $V(\Omega)$ not in the orbit $W\cdot\Omega$ is the zero weight. Each simple Lie algebra has a unique quasi-minuscule highest weight module, and the multiplicity of the zero weight is the number of short simple roots. The highest weight of that quasi-minuscule module is the highest short root of the associated root system, which in the simply-laced case is also the highest long root, making it the adjoint module. Therefore, the set of non-zero weights of the quasi-minuscule highest weight module coincides with the set of all short roots, and these non-zero weights appear with multiplicity 1. For an explicit construction of the quantum analog of the quasi-minuscule module, see \cite[5A.2]{Jan-1996aa}. We assume the Bourbaki numbering of the Dynkin diagram. For types $G_2$, $F_4$, and $E_8$, no minuscule weight exists. The quasi-minuscule weight for these three exceptional types is, in fact, a fundamental weight $\varpi_i$ (see  Appendix~A.2.3, \cite{LakRag-2008aa}):
\begin{center}
\begin{tabular}{ccc}
  Type & Index $i$ & $\varpi_i$ \\ \hline
  $G_2$ & $1$ & $\dim=7$ \\
  $F_4$ & $4$ & $\dim=26$ \\
  $E_8$ & $8$ & $\dim=248 (\text{adjoint})$
\end{tabular}
\end{center}
In each case, $\varpi_i$ is a tensor generator for the representation ring;
that is, every irreducible $V(\Lambda)$ is isomorphic to a
direct summand of some tensor power $V(\varpi_i)^{\otimes r}$
(see~\cite[Prop. 5A.10]{Jan-1996aa}).
The multiplicity of the zero weight in $V(\varpi_i)$ is $m = 1, 2, 8$ for
$G_2, F_4, E_8$ respectively. Furthermore, it follows from ~\cite[Lemma 5A.9.b]{Jan-1996aa} that in either of these three types, $V(\varpi_i)$ appears with multiplicity 1 as a direct summand in the decomposition of $V(\varpi_i)\otimes V(\varpi_i)$.

\subsection{Obstruction in the quasi-minuscule case}\label{subsec:obstruction}

The proof of the crystal analog of triangular decomposition for the types $A_n, B_n, C_n, D_n, E_6, \text{and}\; E_7$ relies on \cite[Theorem~3.6]{DDPa}, stated below. 

\begin{theorem}
\label{DDPm}
Let $\mathfrak{g}$ be a simple complex Lie algebra.
For any dominant weight $\Omega\neq 0$, choose a lower global basis 
vector $v^{\Omega}_{j}$ of $V(\Omega)$ with $\wt(v^{\Omega}_{j})\neq 0$. 
Then the following are equivalent.
\begin{enumerate}
\item  
There are dominant weights $\Lambda,\Gamma$, and a $U_t(\mathfrak{g})$-module morphism
\begin{center}
 $T: V(\Lambda)\otimes V(-\omega_{0}.\Gamma)\to V(\Omega)$ 
\end{center}
        such that $T( L(\Lambda)\otimes L(-\omega_{0}.\Gamma))=L(\Omega)$ and 
        $T(v^{\Lambda}_{\Lambda}\otimes v^{-\omega_{0}.\Gamma}_{-\Gamma})=v^{\Omega}_{j}$.\\
\item 
$\wt(v^{\Omega}_{j})=\omega. \Omega$ for some $\omega\in W$.
\end{enumerate}
\end{theorem}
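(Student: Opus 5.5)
The plan is to pass to the crystal limit $t\to 0$ and compare both conditions with the notion of an \emph{extremal} element of a crystal. Recall that in a connected crystal $B(\Omega)$ the elements whose weights lie in $W\cdot\Omega$ are exactly the extremal elements, and each such weight space of $V(\Omega)$ is one-dimensional. The auxiliary fact I would establish first is the following.

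\emph{Claim.} Let $\Lambda,\Gamma\in\Pp$, and let $u_\Lambda\in B(\Lambda)$ and $u_{-\Gamma}\in B(-\omega_0.\Gamma)$ be the highest and lowest weight elements. Then the element $u_\Lambda\otimes u_{-\Gamma}$ of $B(\Lambda)\otimes B(-\omega_0.\Gamma)$ is extremal of weight $\Lambda-\Gamma$.

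\emph{Proof plan for the Claim.} I would prove a stronger, $W$-stable statement by induction on $\ell(w)$. It says: if $b,b'$ are extremal elements and there is $w\in W$ with $w^{-1}\wt(b)$ dominant and $w^{-1}\wt(b')$ antidominant, then $b\otimes b'$ is extremal. For a fixed $i$, the $i$-strings through $b$ and $b'$ are extremal strings of lengths $a=|\langle \alpha_i^\vee,\wt b\rangle|$ and $c=|\langle\alpha_i^\vee,\wt b'\rangle|$. The sign hypothesis forces one of two configurations: $b$ sits at the top and $b'$ at the bottom of its string, or the reverse. By the tensor product (signature) rule for $\mathfrak{sl}_2$-crystals, top$\otimes$bottom is an end of its string in $B(a)\otimes B(c)$. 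Reflecting it to the other end gives $\tilde f_i^{\,a-c}b\otimes b'$ when $a\ge c$, and $b\otimes\tilde e_i^{\,c-a}b'$ when $a<c$. In either case the result is again a tensor product of extremal elements satisfying the hypothesis with $w$ replaced by $s_iw$. This verifies the defining condition of extremality.

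\emph{(1)$\Rightarrow$(2).} Since $T$ is a $\Ut$-morphism with $T(L(\Lambda)\otimes L(-\omega_0.\Gamma))=L(\Omega)$, it commutes with the Kashiwara operators. Hence it induces a morphism of crystals
\[
\bar T\colon B(\Lambda)\otimes B(-\omega_0.\Gamma)\to B(\Omega)\sqcup\{0\}.
\]
The hypothesis gives $\bar T(u_\Lambda\otimes u_{-\Gamma})=b^\Omega_j\neq 0$. A crystal morphism that is nonzero on a connected component is an isomorphism onto its image, so it preserves extremality. By the Claim, $b^\Omega_j$ is therefore extremal in $B(\Omega)$, and so $\wt(v^\Omega_j)\in W\cdot\Omega$.

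\emph{(2)$\Rightarrow$(1).} Suppose $\wt(v^\Omega_j)=\omega.\Omega$. I would construct $\Lambda$, $\Gamma$ and $T$ in three steps.

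\begin{enumerate}
\item Expand $\omega.\Omega=\sum_k n_k\varpi_k$ in the fundamental weights and split it as $\omega.\Omega=\Lambda-\Gamma$, where $\Lambda=\sum_{n_k>0}n_k\varpi_k$ and $\Gamma=\sum_{n_k<0}(-n_k)\varpi_k$.
\item By the Claim, $b=u_\Lambda\otimes u_{-\Gamma}$ is extremal of weight $\omega.\Omega$. Its connected component is some $B(\Omega')$, and the extremal weights of $B(\Omega')$ form the orbit $W\cdot\Omega'$. The unique dominant weight in $W\cdot\omega.\Omega$ is $\Omega$, so $\Omega'=\Omega$.
\item By Kashiwara's tensor product theorem, $L(\Lambda)\otimes L(-\omega_0.\Gamma)$ splits as a direct sum of crystal lattices of irreducible summands, indexed by the components of the tensor crystal. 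Let $T_0$ be the projection onto the summand $V(\Omega)$ corresponding to the component of $b$, followed by an isomorphism with $V(\Omega)$ matching crystal lattices. Then $T_0(L\otimes L)=L(\Omega)$.
\end{enumerate}

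Now $v^\Lambda_\Lambda\otimes v^{-\omega_0.\Gamma}_{-\Gamma}$ has weight $\omega.\Omega$, and the weight space $V(\Omega)_{\omega.\Omega}$ is one-dimensional and spanned by $v^\Omega_j$. Hence
\[
T_0\bigl(v^\Lambda_\Lambda\otimes v^{-\omega_0.\Gamma}_{-\Gamma}\bigr)=c\,v^\Omega_j \quad\text{with } c\in\Aform.
\]
Reducing modulo $tL(\Omega)$ gives $\bar T_0(b)=b^\Omega_j$, so $c(0)=1$ and $c$ is a unit of $\Aform$. The map $T=c^{-1}T_0$ then satisfies both requirements of (1).

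The main obstacle is the $W$-stable form of the extremality Claim, specifically the bookkeeping of the $i$-string positions under the signature rule. A secondary point needing care is choosing the isomorphism in step 3 of (2)$\Rightarrow$(1) so that it carries the summand's lattice exactly onto $L(\Omega)$, not merely up to a scalar.
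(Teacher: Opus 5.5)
The paper gives no proof of this statement; it is imported from the companion paper of Das--Dey--Pal, so I am judging your argument on its own. Your reduction is sound: both implications follow from your Claim that $u_\Lambda\otimes u_{-\Gamma}$ is extremal, and granted the Claim, your deductions in (1)$\Rightarrow$(2) and (2)$\Rightarrow$(1) are essentially fine. The gap is in the proof of the Claim. The $W$-stable statement you set up for the induction on $\ell(w)$ is false.

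Use Kashiwara's tensor rule, the one used in this paper: $\tilde e_i$ acts on the left factor iff $\varphi_i(b)\ge\varepsilon_i(b')$. Under this rule only top$\otimes$bottom is an end of its $i$-string. For bottom$\otimes$top, i.e.\ $\varphi_i(b)=0$, $\varepsilon_i(b)=a$, $\varepsilon_i(b')=0$, $\varphi_i(b')=c$, one gets $\varepsilon_i(b\otimes b')=a$ and $\varphi_i(b\otimes b')=c$. This is an interior point of the string when $a,c>0$, and your sign hypothesis allows this configuration. Concretely, take $\mathfrak{sl}_3$ with $B(\varpi_1)=\{1\to2\to3\}$ (arrows $\tilde f_1,\tilde f_2$) and $B(\varpi_2)=\{\bar3\to\bar2\to\bar1\}$ (arrows $\tilde f_2,\tilde f_1$). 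Let $b=2$, $b'=\bar2$, $w=s_1$. Both are extremal, $s_1\wt(b)=\varpi_1$ is dominant and $s_1\wt(b')=-\varpi_1$ is antidominant. Yet $b\otimes b'$ has weight $0$ while $\tilde e_1(2\otimes\bar2)=1\otimes\bar2\neq0$, so $b\otimes b'$ is not extremal.

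The top$\otimes$bottom step does not close either. For $0<c<a$ the reflected element $\tilde f_i^{\,a-c}b\otimes b'$ has its left factor strictly inside the $i$-string of $b$, so that factor is not extremal; symmetrically for $\tilde e_i^{\,c-a}b'$ when $0<a<c$. This happens at the very first step of (1)$\Rightarrow$(2), where $\Lambda,\Gamma$ are arbitrary. For $\mathfrak{sl}_3$ with $\Lambda=2\varpi_1$, $\Gamma=\varpi_1$, $i=1$, the left factor $\tilde f_1u_{2\varpi_1}$ has weight $\varpi_2\notin W\cdot 2\varpi_1$. It also happens later for your disjoint-support choice in (2)$\Rightarrow$(1): with $\Lambda=2\varpi_1$, $\Gamma=\varpi_2$, after the $i=1$ step the $i=2$ step has $a=2$, $c=1$.

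Your computation at $w=e$ does show that $u_\Lambda\otimes u_{-\Gamma}$ sits at an end of every $i$-string. That is strictly weaker than extremality. In $B(2\varpi_1)$ for $\mathfrak{sl}_3$, the element of weight $\varpi_1-\varpi_2$ is at an end of both its strings but is not extremal, and this is exactly the distinction the theorem turns on.

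The Claim for $w=e$ is nevertheless true. It says that $u_\Lambda\otimes u_{-\Gamma}$ lies in the PRV-type component $B(\overline{\Lambda-\Gamma})$, where $\overline{\,\cdot\,}$ denotes the dominant $W$-conjugate. Proving it needs a genuinely global input. One option is Kashiwara's theory of extremal weight modules over the modified quantized enveloping algebra. There $u_\infty\otimes t_{\Lambda-\Gamma}\otimes u_{-\infty}$ is extremal, and the crystal morphism induced from $a_{\Lambda-\Gamma}\mapsto v^\Lambda_\Lambda\otimes v^{-w_0\Gamma}_{-\Gamma}$ sends it to $u_\Lambda\otimes u_{-\Gamma}$. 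The version of your lemma that does survive the induction is the same-chamber one, with $w^{-1}\wt(b)$ and $w^{-1}\wt(b')$ both dominant, but it does not cover the present situation.
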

The non-zero weight condition of Theorem \ref{DDPm} is essential: for global basis vectors of weight zero, it breaks down as indicated in the next Theorem
(see \cite[Proposition~3.10]{DDPa}). 

\begin{theorem}
Let $\mathfrak{g}$ be a simple complex Lie algebra and let $\Lambda,\Omega\in P_{+}$ with $\Omega \neq 0$.
Let $v^{\Omega}_{j}$ be a nonzero global basis vector in $V(\Omega)$ of weight 0. Then there does not exist any $U_t(\mathfrak{g})$-module morphism
\[
T: V(\Lambda)\otimes V(-w_0.\Lambda) \to V(\Omega)
\]
such that $T(L(\Lambda)\otimes L(-\omega_{0}.\Lambda))=L(\Omega)$ and 
$T(v_{\Lambda}^{\Lambda}\otimes v_{-\Lambda}^{-\omega_{0}.\Lambda})=v^{\Omega}_{j}$.
\label{obs}
\end{theorem}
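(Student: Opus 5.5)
Suppose, for contradiction, that such a morphism $T$ exists. The strategy is to pass to the crystal limit $t\to 0$ and read off a contradiction from the tensor product rule for crystals. Two facts drive it: $T$ intertwines the Kashiwara operators, and the lattice $L(\Lambda)\otimes L(-w_0.\Lambda)$ carries the crystal $B(\Lambda)\otimes B(-w_0.\Lambda)$. Since $T$ maps the lattice onto $L(\Omega)$, it induces a surjection $\bar T$ from $(L(\Lambda)\otimes L(-w_0.\Lambda))/t(\cdots)$ onto $L(\Omega)/tL(\Omega)$.

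\textbf{Step 1: $\bar T$ is a crystal morphism.} Decompose $V(\Lambda)\otimes V(-w_0.\Lambda)$ into irreducibles compatibly with crystal lattices, using Kashiwara's tensor product theorem. The lattice then splits as a direct sum of copies of $L(\Omega')$, with crystal $B(\Lambda)\otimes B(-w_0.\Lambda)$ decomposed into connected components $B(\Omega')$. By Schur's lemma, $T$ is zero on components with $\Omega'\neq\Omega$ and a scalar on each copy of $V(\Omega)$. The lattice condition forces each scalar to lie in $A_0$. Hence $\bar T$, modulo $t$, is a sum of scalar multiples of the crystal isomorphisms from the $B(\Omega)$-components onto $B(\Omega)$, and it commutes with $\tilde e_i$ and $\tilde f_i$.

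\textbf{Step 2: the key crystal computation.} The image $\bar T(b_\Lambda\otimes b_{-\Lambda})$ must equal $b^\Omega_j$, a crystal basis element of weight $0$. I will show this is impossible. Take $u=b_\Lambda\otimes b_{-\Lambda}$, with $b_\Lambda$ highest weight and $b_{-\Lambda}$ lowest weight. By the tensor product rule, $\varepsilon_i(u)$ and $\varphi_i(u)$ are governed by $\varphi_i(b_\Lambda)=0$ in one factor and $\varepsilon_i(b_{-\Lambda})=0$ in the other. Fix $i$ with $\langle\Lambda,\alpha_i^\vee\rangle=a>0$. Then $\tilde e_i u$ is computed on the factor where $\varepsilon_i\neq 0$ and is nonzero, so $\varepsilon_i(u)=a$. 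Likewise $\varphi_i(u)=a$.

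In $B(\Omega)$, every element $b$ of weight $0$ satisfies $\varepsilon_i(b)=\varphi_i(b)$. Matching these values is therefore not enough for a contradiction. The real argument has to use the global basis compatibility, and here I follow the approach of \cite[Prop.~3.10]{DDPa}.

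\textbf{Step 3: the global-basis argument.} Apply $E_i^{(a+1)}$ or $E_i^{(a)}$ to $v_\Lambda\otimes v_{-\Lambda}$ using the explicit coproduct. Only the second factor contributes, so
\[E_i^{(a)}(v_\Lambda\otimes v_{-\Lambda})=t^{c}\,v_\Lambda\otimes E_i^{(a)}v_{-\Lambda}\]
for some explicit exponent $c$. Comparing with $E_i^{(a)}v^\Omega_j$, which lies in $L(\Omega)$ and is nonzero modulo $t$ because $\varepsilon_i(b^\Omega_j)=a$, gives a $t$-power that is positive. The factor $t^{c}$ with $c\neq 0$ arises from the $K_i$-twist in the coproduct acting on a nonzero weight. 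This yields a contradiction with $T$ preserving lattices in both directions.

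\textbf{Main obstacle.} I expect the main obstacle to be exactly this valuation bookkeeping. The sign of the exponent depends on the coproduct convention, and one may need to use the symmetry $E\leftrightarrow F$ to choose the side where the exponent is positive. If that fails, I would instead use the bilinear form and polarization. The argument would compute that $(T(v_\Lambda\otimes v_{-\Lambda}),T(v_\Lambda\otimes v_{-\Lambda}))$ has a valuation incompatible with $(v^\Omega_j,v^\Omega_j)\in 1+tA_0$. This would exploit that the weight-$0$ constraint, coming from $u$ being extremal in both factors, pins the image to a specific combination of the weight-zero crystal elements.
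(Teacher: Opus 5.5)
There is a genuine gap. It sits in Step~2, which is in fact where the proof should end.

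With the tensor product rule this paper fixes in Section~\ref{sec:quasi_min_crystal}, your values $\varepsilon_i(u)=\varphi_i(u)=a$ are wrong. That rule says $\tilde f_k$ acts on the right factor of $b\otimes c$ iff $\varphi_k(b)\le\varepsilon_k(c)$, and correspondingly $\tilde e_k$ acts on the left factor iff $\varphi_k(b)\ge\varepsilon_k(c)$. For $u=b_\Lambda\otimes b_{-\Lambda}$ one has $\varphi_i(b_\Lambda)=\langle\alpha_i^\vee,\Lambda\rangle=\varepsilon_i(b_{-\Lambda})$ for \emph{every} $i$. Hence $\tilde e_iu=\tilde e_ib_\Lambda\otimes b_{-\Lambda}=0$ and $\tilde f_iu=b_\Lambda\otimes\tilde f_ib_{-\Lambda}=0$. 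So $\varepsilon_i(u)=\varphi_i(u)=0$ for all $i$, and $\{u\}$ is a connected component of $B(\Lambda)\otimes B(-w_0.\Lambda)$ isomorphic to $B(0)$.

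Your Step~1 then closes the argument at once. $\bar T$ vanishes on every component not isomorphic to $B(\Omega)$, and $\Omega\neq 0$, so $\bar T(u)=0$. That is, $T(v^\Lambda_\Lambda\otimes v^{-w_0.\Lambda}_{-\Lambda})\in tL(\Omega)$, whereas $v^\Omega_j\notin tL(\Omega)$. Equivalently, since $\bar T$ intertwines the $\tilde e_i$, $b^\Omega_j$ would be a highest weight element of weight $0$ in $B(\Omega)$, which is impossible. The paper does not reprove this statement; it imports it from \cite[Proposition~3.10]{DDPa}. In the paper's conventions this short crystal argument is all that is needed, and your Step~3 is unnecessary.

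Step~3 also cannot be made to work starting from your version of Step~2. The values $\varepsilon_i(u)=\varphi_i(u)=a$ are what the \emph{opposite} tensor product rule produces, and for a coproduct realizing that rule the statement is false. Take $\mathfrak{sl}_2$ and $\Lambda=\varpi_1$. The opposite rule makes $b_+\otimes b_-$ the weight-zero element of the $B(2\varpi_1)$-component. Since $V(2\varpi_1)_0$ is one-dimensional, the projection $T$ onto the multiplicity-one summand $V(2\varpi_1)$ satisfies $T(L(\varpi_1)\otimes L(\varpi_1))=L(2\varpi_1)$ and $T(v_+\otimes v_-)=c\,v^{2\varpi_1}_0$ with $c\in A_0$, $c(0)=1$. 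Then $c^{-1}T$ satisfies every hypothesis. So no valuation bookkeeping can rescue Step~3; the convention enters exactly through the crystal computation you set aside.

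Independently, Step~3 rests on claims that are false or unverified:
\begin{itemize}
\item Divided powers $E_i^{(a)}$ do not preserve $L(\Omega)$. In $V(n\varpi_1)$ for $\mathfrak{sl}_2$, $E^{(k)}F^{(k)}v_+=\binom{n}{k}_t v_+$, which has a pole at $t=0$ for $0<k<n$.
\item For a coproduct of the form $\Delta(E_i)=E_i\otimes K_i^{-1}+1\otimes E_i$, the exponent is $c=0$, so there is no $K_i$-twist at all.
\item The claimed positivity of the $t$-power, and the contradiction said to follow from it, are never actually derived.
\end{itemize}
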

It is precisely these weight-zero global basis vectors that must be handled
separately in the quasi-minuscule case.

\section{A Crystal Lemma for the Quasi-Minuscule Representation}%
\label{sec:quasi_min_crystal}

\subsection{The \texorpdfstring{$G_2$}{G2} case: crystal graphs}\label{subsec:G2_crystal}
Before giving the general proof, we illustrate Lemma \ref{lem:quasi_min_crystal}
concretely in the $G_2$ case, where everything can be drawn explicitly. We use the Bourbaki labeling: $\alpha_1$(short) and $\alpha_2$(long) are two simple roots.\\
\tikzset{
  dnode/.style={circle, draw, fill=white, inner sep=0pt, minimum size=6pt},
  every label/.append style={font=\scriptsize}
}
\begin{center}
\begin{tikzpicture}
  \node[dnode, label=below:$1$] (1) at (0,0) {};
  \node[dnode, label=below:$2$] (2) at (1,0) {};
  
  \draw (1) -- (2);
  \draw[transform canvas={yshift=2.5pt}] (1) -- (2);
  \draw[transform canvas={yshift=-2.5pt}] (1) -- (2);
  \draw (0.55, 0.15) -- (0.4, 0) -- (0.55, -0.15); 
\end{tikzpicture}
\end{center}
The quasi-minuscule fundamental
weight is then $\varpi_1=2\alpha_1+\alpha_2$ with $\langle\alpha_1^\vee,\varpi_1\rangle=1$ and
$\langle\alpha_2^\vee,\varpi_1\rangle=0$.
The representation $V(\varpi_1)$ is $7$-dimensional; its weights, listed from
top to bottom, are
\[
  \varpi_1,\quad
  \varpi_1{-}\alpha_1,\quad
  \varpi_1{-}\alpha_1{-}\alpha_2,\quad
  0,\quad
  {-}(\varpi_1{-}\alpha_1{-}\alpha_2),\quad
  {-}(\varpi_1{-}\alpha_1),\quad
  {-}\varpi_1.
\]
We label the corresponding crystal basis elements $b_1,\dots,b_7$ in this order.
\Cref{fig:G2_crystal} shows the crystal graph $B(\varpi_1)$: a straight chain
in which $\tilde{f}_1$ and $\tilde{f}_2$ alternate (the edge from $b_3$ to $b_4$
and from $b_4$ to $b_5$ are both labelled $1$).

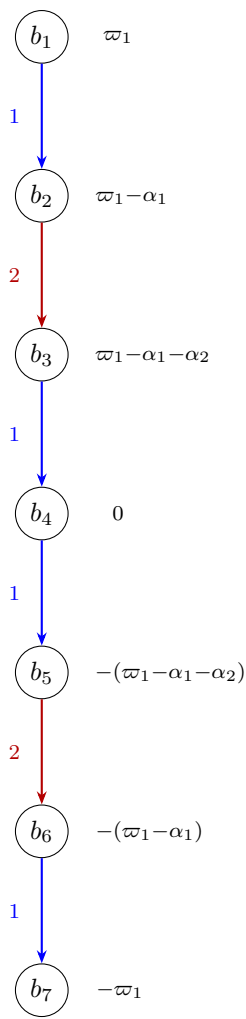
\begin{figure}[ht]
\centering
\begin{tikzpicture}[
    node distance = 1.4cm,
    every node/.style = {draw, circle, minimum size=7mm, inner sep=1pt,
                         font=\small},
    arr/.style = {-{Stealth[length=5pt]}, thick},
    f1/.style  = {arr, blue},
    f2/.style  = {arr, red!70!black}
  ]
  \node (b1) {$b_1$};
  \node (b2) [below=of b1] {$b_2$};
  \node (b3) [below=of b2] {$b_3$};
  \node (b4) [below=of b3] {$b_4$};
  \node (b5) [below=of b4] {$b_5$};
  \node (b6) [below=of b5] {$b_6$};
  \node (b7) [below=of b6] {$b_7$};

  \node[draw=none, right=0.3cm of b1, font=\scriptsize] {$\varpi_1$};
  \node[draw=none, right=0.3cm of b2, font=\scriptsize] {$\varpi_1{-}\alpha_1$};
  \node[draw=none, right=0.3cm of b3, font=\scriptsize] {$\varpi_1{-}\alpha_1{-}\alpha_2$};
  \node[draw=none, right=0.3cm of b4, font=\scriptsize] {$0$};
  \node[draw=none, right=0.3cm of b5, font=\scriptsize] {${-}(\varpi_1{-}\alpha_1{-}\alpha_2)$};
  \node[draw=none, right=0.3cm of b6, font=\scriptsize] {${-}(\varpi_1{-}\alpha_1)$};
  \node[draw=none, right=0.3cm of b7, font=\scriptsize] {${-}\varpi_1$};

  \draw[f1] (b1) -- node[draw=none,left,font=\scriptsize,blue]{$1$} (b2);
  \draw[f2] (b2) -- node[draw=none,left,font=\scriptsize,red!70!black]{$2$} (b3);
  \draw[f1] (b3) -- node[draw=none,left,font=\scriptsize,blue]{$1$} (b4);
  \draw[f1] (b4) -- node[draw=none,left,font=\scriptsize,blue]{$1$} (b5);
  \draw[f2] (b5) -- node[draw=none,left,font=\scriptsize,red!70!black]{$2$} (b6);
  \draw[f1] (b6) -- node[draw=none,left,font=\scriptsize,blue]{$1$} (b7);
\end{tikzpicture}
\caption{The crystal graph $B(\varpi_1)$ for $G_2$. Blue arrows are
$\tilde{f}_1$-edges, red arrows are $\tilde{f}_2$-edges. The weight-zero node is $b_4$. (See Proposition 2.1, \cite{KM94})}
\label{fig:G2_crystal}
\end{figure}

\medskip

Now consider the tensor product crystal $B(\varpi_1)\otimes B(\varpi_1)$.
We apply the tensor product rule: $\tilde{f}_k$ acts on the \emph{right}
factor of $b\otimes c$ if $\varphi_k(b)\le\varepsilon_k(c)$, and on the
\emph{left} factor if $\varphi_k(b)>\varepsilon_k(c)$.
From the chain structure of $B(\varpi_1)$ one reads off the string lengths:
\[
\begin{array}{c|ccccccc}
  & b_1 & b_2 & b_3 & b_4 & b_5 & b_6 & b_7 \\ \hline
\varphi_1 & 1 & 0 & 2 & 1 & 0 & 1 & 0 \\
\varepsilon_1 & 0 & 1 & 0 & 1 & 2 & 0 & 1 \\
\varphi_2 & 0 & 1 & 0 & 0 & 1 & 0 & 0 \\
\varepsilon_2 & 0 & 0 & 1 & 0 & 0 & 1 & 0
\end{array}
\]
The unique highest weight node of weight $\varpi_1$ in
$B(\varpi_1)\otimes B(\varpi_1)$ is $x_1 = b_1\otimes b_4$,
since $\wt(b_1)+\wt(b_4)=\varpi_1$.
Applying the tensor product rule step by step yields the seven nodes of
$\mathcal{C}$:
\begin{align*}
  x_1 &= b_1\otimes b_4, & \wt &= \varpi_1, \\
  x_2 &= b_1\otimes b_5, & \wt &= \varpi_1{-}\alpha_1, \\
  x_3 &= b_1\otimes b_6, & \wt &= \varpi_1{-}\alpha_1{-}\alpha_2, \\
  x_4 &= b_2\otimes b_6, & \wt &= 0, \\
  x_5 &= b_2\otimes b_7, & \wt &= {-}(\varpi_1{-}\alpha_1{-}\alpha_2), \\
  x_6 &= b_3\otimes b_7, & \wt &= {-}(\varpi_1{-}\alpha_1), \\
  x_7 &= b_4\otimes b_7, & \wt &= {-}\varpi_1,
\end{align*}
with crystal edges
$x_1\xrightarrow{1}x_2\xrightarrow{2}x_3\xrightarrow{1}x_4
\xrightarrow{1}x_5\xrightarrow{2}x_6\xrightarrow{1}x_7$.
For the reader's convenience we spell out the rule at each step:
\begin{itemize}
  \item $x_1\to x_2$: $\varphi_1(b_1)=1\le\varepsilon_1(b_4)=1$,
        right: $b_1\otimes\tilde{f}_1(b_4)=b_1\otimes b_5$.
  \item $x_2\to x_3$: $\varphi_2(b_1)=0\le\varepsilon_2(b_5)=0$,
        right: $b_1\otimes\tilde{f}_2(b_5)=b_1\otimes b_6$.
  \item $x_3\to x_4$: $\varphi_1(b_1)=1>\varepsilon_1(b_6)=0$,
        left: $\tilde{f}_1(b_1)\otimes b_6=b_2\otimes b_6$.
  \item $x_4\to x_5$: $\varphi_1(b_2)=0\le\varepsilon_1(b_6)=0$,
        right: $b_2\otimes\tilde{f}_1(b_6)=b_2\otimes b_7$.
  \item $x_5\to x_6$: $\varphi_2(b_2)=1>\varepsilon_2(b_7)=0$,
        left: $\tilde{f}_2(b_2)\otimes b_7=b_3\otimes b_7$.
  \item $x_6\to x_7$: $\varphi_1(b_3)=2>\varepsilon_1(b_7)=1$,
        left: $\tilde{f}_1(b_3)\otimes b_7=b_4\otimes b_7$.
\end{itemize}
The right tensor factors are $b_4,b_5,b_6,b_6,b_7,b_7,b_7$
with weights $0,{<}0,{<}0,{<}0,{<}0,{<}0,{<}0$ respectively.
Hence for every $k\ge 2$ the right factor of $x_k$ has strictly negative
weight, confirming Lemma \ref{lem:quasi_min_crystal}(i).
The unique weight-zero node is $x_4=b_2\otimes b_6$, where
$\wt(b_2)=\varpi_1-\alpha_1>0$ and $\wt(b_6)=-(\varpi_1-\alpha_1)<0$,
so $\beta_1=\varpi_1-\alpha_1$ is the positive root in the notation of Theorem
\ref{thm:tridecomp_exceptional}, confirming Lemma \ref{lem:quasi_min_crystal}(ii).
\Cref{fig:G2_subcrystal} displays this subcrystal.

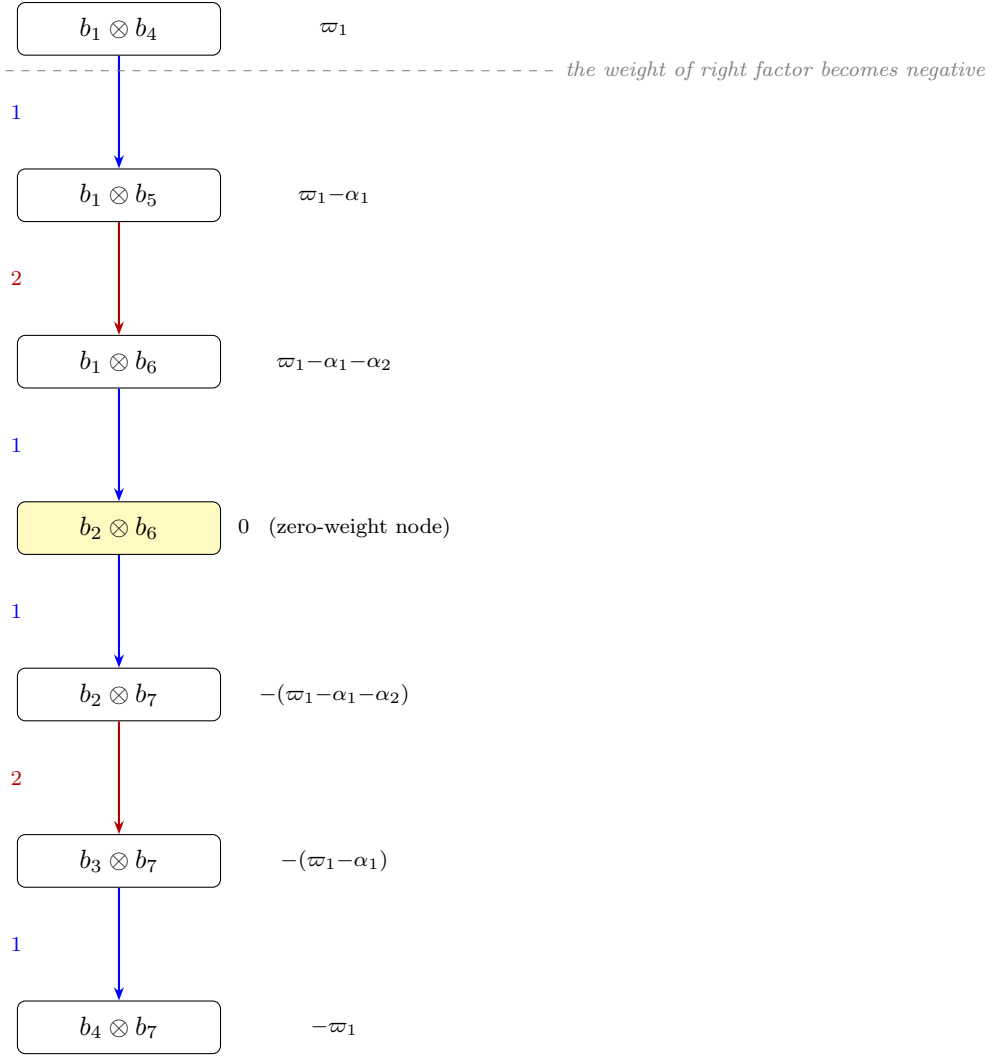
\begin{figure}[ht]
\centering
\begin{tikzpicture}[
    node distance = 1.5cm,
    every node/.style = {draw, rounded corners=3pt, minimum width=2.7cm,
                         minimum height=7mm, inner sep=2pt, font=\small},
    arr/.style = {-{Stealth[length=5pt]}, thick},
    f1/.style  = {arr, blue},
    f2/.style  = {arr, red!70!black},
    highlight/.style = {fill=yellow!30}
  ]
  \node (x1) {$b_1 \otimes b_4$};
  \node (x2) [below=of x1] {$b_1 \otimes b_5$};
  \node (x3) [below=of x2] {$b_1 \otimes b_6$};
  \node (x4) [below=of x3, highlight] {$b_2 \otimes b_6$};
  \node (x5) [below=of x4] {$b_2 \otimes b_7$};
  \node (x6) [below=of x5] {$b_3 \otimes b_7$};
  \node (x7) [below=of x6] {$b_4 \otimes b_7$};

  \node[draw=none, right=0.15cm of x1, font=\scriptsize]
       {$\varpi_1$};
  \node[draw=none, right=0.15cm of x2, font=\scriptsize]
       {$\varpi_1{-}\alpha_1$};
  \node[draw=none, right=0.15cm of x3, font=\scriptsize]
       {$\varpi_1{-}\alpha_1{-}\alpha_2$};
  \node[draw=none, right=0.15cm of x4, font=\scriptsize]
       {$0$ \enspace{\normalfont(zero-weight node)}};
  \node[draw=none, right=0.15cm of x5, font=\scriptsize]
       {${-}(\varpi_1{-}\alpha_1{-}\alpha_2)$};
  \node[draw=none, right=0.15cm of x6, font=\scriptsize]
       {${-}(\varpi_1{-}\alpha_1)$};
  \node[draw=none, right=0.15cm of x7, font=\scriptsize]
       {${-}\varpi_1$};

  \draw[f1] (x1) -- node[draw=none,left,font=\scriptsize,blue]{$1$} (x2);
  \draw[f2] (x2) -- node[draw=none,left,font=\scriptsize,red!70!black]{$2$} (x3);
  \draw[f1] (x3) -- node[draw=none,left,font=\scriptsize,blue]{$1$} (x4);
  \draw[f1] (x4) -- node[draw=none,left,font=\scriptsize,blue]{$1$} (x5);
  \draw[f2] (x5) -- node[draw=none,left,font=\scriptsize,red!70!black]{$2$} (x6);
  \draw[f1] (x6) -- node[draw=none,left,font=\scriptsize,blue]{$1$} (x7);

  \draw[dashed, gray] ($(x1.south west)+(-0.15,-0.2)$)
                   -- ($(x1.south east)+(4.5,-0.2)$)
    node[draw=none, right, font=\scriptsize\itshape, gray]
    {the weight of right factor becomes negative};
\end{tikzpicture}
\caption{The subcrystal $\mathcal{C}\cong B(\varpi_1)$ inside
$B(\varpi_1)\otimes B(\varpi_1)$ for $G_2$.
The yellow node $x_4=b_2\otimes b_6$ is the unique weight-zero node;
its left factor $b_2$ has weight $\varpi_1-\alpha_1>0$ and its right
factor $b_6$ has weight $-(\varpi_1-\alpha_1)<0$.
The right tensor factor is $b_4$ (weight~$0$) only at the top node
$x_1$; from $x_2$ onward every right factor has strictly negative weight.
Blue arrows are $\tilde{f}_1$-edges, red arrows are $\tilde{f}_2$-edges.}
\label{fig:G2_subcrystal}
\end{figure}

\medskip

\begin{lemma}\label{lem:quasi_min_crystal}
Let $\g$ be a simple Lie algebra and $\varpi_i$ be a fundamental weight. Assume that $V(\varpi_i)$ appears as a direct summand of $V(\varpi_i)\otimes V(\varpi_i)$ into irreducibles.
Let $x = b_{\varpi_i}\otimes c$ be a highest weight node of weight
$\varpi_i$ in the tensor product crystal $B(\varpi_i)\otimes B(\varpi_i)$,
where $b_{\varpi_i}$ is the highest weight node of $B(\varpi_i)$.
Let $\mathcal{C}$ be the connected crystal component generated by $x$.
Then:
\begin{enumerate}[label=\textup{(\roman*)}]
  \item For every node $y = b'\otimes c'\in\mathcal{C}$ with $y\ne x$, the
    weight of the right tensor factor satisfies $\wt(c') < 0$.
  \item If $y=b'\otimes c'\in\mathcal{C}$ is a weight-zero node, then
    $\wt(b') = -\wt(c') \ne 0$.
\end{enumerate}
\end{lemma}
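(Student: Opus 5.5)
The plan is to track how the two tensor factors change as we descend from $x$ in $\mathcal{C}$, using only the tensor product rule recalled in \Cref{subsec:G2_crystal}. Under that rule, $\tilde f_k$ acts on the right factor of $b\otimes c$ if $\varphi_k(b)\le\varepsilon_k(c)$, and on the left factor otherwise. Applying some $\tilde f_k$ to a node therefore lowers exactly one factor by $\alpha_k$ and leaves the other unchanged.

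Every $y\in\mathcal{C}$ has the form $y=\tilde f_{k_r}\cdots\tilde f_{k_1}x$, since $x$ is the highest weight node of $\mathcal{C}$. Writing $y=b'\otimes c'$, the right factor is $c'=\tilde f_{j_s}\cdots\tilde f_{j_1}c$ for the subsequence of steps that acted on the right. Hence $\wt(c')=\wt(c)-\sum_m\alpha_{j_m}$. So everything reduces to two facts: first, $\wt(c)=0$; second, for $y\neq x$ at least one step acted on the right.

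\textbf{Step 1: the node $c$.} Since $\wt(x)=\varpi_i=\wt(b_{\varpi_i})+\wt(c)$, we get $\wt(c)=0$.

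\textbf{Step 2: $\varepsilon$-values of $c$.} By the tensor product rule for $\varepsilon$, the condition $\tilde e_j x=0$ for all $j$ is equivalent to
\[
\varepsilon_j(c)\le\varphi_j(b_{\varpi_i})=\langle\alpha_j^\vee,\varpi_i\rangle=\delta_{ij}\quad\text{for all } j.
\]
Thus $\varepsilon_j(c)=0$ for $j\neq i$ and $\varepsilon_i(c)\le 1$. Now $c$ has weight $0\neq\varpi_i$, so $c$ is not the highest weight node of $B(\varpi_i)$, and some $\varepsilon_j(c)$ must be positive. This forces $\varepsilon_i(c)=1$. The hypothesis that $V(\varpi_i)$ occurs in $V(\varpi_i)\otimes V(\varpi_i)$ is used only to guarantee that such an $x$ exists.

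\textbf{Step 3: the first step from $x$ acts on the right.} Take any $j$ with $\tilde f_j x\neq 0$.
\begin{itemize}
\item If $j\neq i$, then $\varphi_j(b_{\varpi_i})=0\le\varepsilon_j(c)=0$.
\item If $j=i$, then $\varphi_i(b_{\varpi_i})=1\le\varepsilon_i(c)=1$.
\end{itemize}
In both cases $\tilde f_j$ acts on the right factor. Every $y\neq x$ in $\mathcal{C}$ is reached through such a first step, and later steps can only lower $c'$ further. Therefore $\wt(c')=-\sum_m\alpha_{j_m}$ with a nonempty sum. So $\wt(c')$ is a nonzero element of $-Q_+$; in fact it is a negative (short) root, because it is a nonzero weight of $B(\varpi_i)$. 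This proves (i).

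\textbf{Part (ii).} If $\wt(y)=0$, then $\wt(b')=-\wt(c')$. Also $y\neq x$, because $\wt(x)=\varpi_i\neq 0$. Part (i) then gives $\wt(c')<0$, and hence $\wt(b')=-\wt(c')\neq 0$, with $\wt(b')$ a positive root.

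The only delicate point I anticipate is Step 2, specifically pinning down $\varepsilon_i(c)=1$ and the boundary case $\varphi_i(b_{\varpi_i})=\varepsilon_i(c)$ in Step 3. This must be checked against the precise tensor convention fixed in the paper, where ties send $\tilde f$ to the right factor. With the opposite convention the roles of the factors would swap and the argument would need to be mirrored. The $G_2$ computation in \Cref{subsec:G2_crystal} confirms the convention: there $c=b_4$ and $x_1\to x_2$ acts on the right.
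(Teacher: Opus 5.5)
Your proof is correct and follows essentially the same route as the paper's. Weight conservation gives $\wt(c)=0$; the highest-weight condition $\varepsilon_j(c)\le\delta_{ij}$ forces every first lowering step from $x$ onto the right factor; and each later $\tilde f_k$ either leaves the right factor unchanged or lowers it further. Pinning down $\varepsilon_i(c)=1$ exactly, and handling $j\neq i$ by showing that any nonzero $\tilde f_j x$ acts on the right, are only cosmetic variants of the paper's Steps 1--3.

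One small caveat concerns your parenthetical claims that $\wt(c')$ is a negative short root and $\wt(b')$ a positive root. These rely on the nonzero weights of $V(\varpi_i)$ being short roots, which holds in the quasi-minuscule case but not for an arbitrary fundamental weight satisfying the lemma's hypothesis. Since the lemma only asserts $\wt(c')<0$, this does not affect the argument.
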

\noindent In particular, if $\mathfrak{g}$ is either of the types $G_2, F_4, \text{or} \;E_8$, and $\varpi_i$ is the quasi-minuscule fundamental weight with $i=1,4,8$, the conclusion of Lemma \ref{lem:quasi_min_crystal} holds. 
\begin{proof}
Weight conservation gives $\wt(x) = \wt(b_{\varpi_i})+\wt(c)$.
Since $\wt(b_{\varpi_i})=\varpi_i$, we get $\wt(c)=0$.

\textit{Step 1: The lower Kashiwara operators $\tilde{f}_j$ for $j\ne i$ act trivialy on $x$.}
Since $b_{\varpi_i}$ is the highest weight element of $B(\varpi_i)$, its
$\varphi$ values equal the Dynkin labels:
$\varphi_j(b_{\varpi_i}) = \langle\alpha_j^\vee,\varpi_i\rangle = \delta_{ji}$.
The tensor product rule requires $\varepsilon_j(c)\le\varphi_j(b_{\varpi_i})$
for $x$ to be annihilated by $\tilde{e}_j$.
For $j\ne i$, $\varphi_j(b_{\varpi_i})=0$ and $\varepsilon_j(c)\ge 0$, so
$\varepsilon_j(c)=0$. Hence for all $j\neq i$, $\phi_j(c)=0$, and therefore it is easy to see that $\tilde{f}_j(x)= b_{\varpi_i}\otimes \tilde{f}_j(c)= 0$
for all $j\ne i$.

\textit{Step 2: $\varepsilon_i(c)\ge 1$.}
If $\varepsilon_i(c)=0$ then $\varepsilon_j(c)=0$ for all $j$, making $c$ a
highest weight element of $B(\varpi_i)$ of weight $0$.
However, $B(\varpi_i)$ contains no highest weight element of weight $0$ since
$\varpi_i$ is the unique highest weight of the irreducible crystal.
This contradiction establishes $\varepsilon_i(c)\ge 1$.

\textit{Step 3: First lowering step yields $\wt(c')<0$.}
From Step~1, $\tilde{f}_j(x)=0$ for $j\ne i$.
From Step~2, $\tilde{f}_i$ acts on the right factor:
$\tilde{f}_i(x) = b_{\varpi_i}\otimes \tilde{f}_ic\neq 0$.
The new right factor has weight $\wt(\tilde{f}_ic)=\wt(c)-\alpha_i=-\alpha_i<0$.

\textit{Step 4: All subsequent lowering steps preserve $\wt(c')<0$.}
Any node $y\in\mathcal{C}\setminus\{x\}$ is obtained from $\tilde{f}_i(x)$
by a sequence of operators $\tilde{f}_k$.
Each application of $\tilde{f}_k$ either acts on the left factor (leaving the
right factor unchanged) or subtracts a positive simple root $\alpha_k$ from
the weight of the right factor.
Starting from $\wt(c')=-\alpha_i<0$, the weight of the right factor remains
$\le -\alpha_i < 0$ under the root partial order.
This establishes part~(i).

Part~(ii) follows immediately: if $\wt(b')+\wt(c')=0$ and $\wt(c')<0$,
then $\wt(b')=-\wt(c')>0$, which is nonzero.
\end{proof}

\section{The Triangular Decomposition}%
\label{sec:main}
We record for reference the generating set result that will be used in the proof of the triangular decomposition.

\begin{proposition}[{\cite[Proposition~3.8]{DDPa}}]\label{prop:R_algebra_gen}
Let $\mathcal{F}\subset\Pp$ be a set of highest weights such that every
$V(\Lambda)$ ($\Lambda\in\Pp$) is a direct summand of some tensor product
$\bigotimes_{k=1}^r V(\Omega_k)$ with $\Omega_k\in\mathcal{F}$.
Then $\OAztG$ is the $\Aform$-algebra generated by
$\{C^\Omega_{v^{\Omega}_i,v^{\Omega}_j} : \Omega\in\mathcal{F},\; 1\le i,j\le\dim\Omega\}$.
\end{proposition}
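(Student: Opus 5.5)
The plan is to prove the two inclusions separately. The main tool is the multiplicativity of matrix coefficients together with Kashiwara's structure theorem for crystal bases of finite-dimensional integrable modules. Write $\mathcal{B}$ for the $\Aform$-algebra generated by the $C^\Omega_{v^\Omega_i,v^\Omega_j}$ with $\Omega\in\mathcal{F}$.

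\emph{Multiplicativity.} From the definition of $\OtG$ and the coproduct of $\Ut$ one has
\[
C^{\Omega_1}_{f_1,v_1}\,C^{\Omega_2}_{f_2,v_2}=C^{V(\Omega_1)\otimes V(\Omega_2)}_{f_1\otimes f_2,\;v_1\otimes v_2}
\]
(possibly with the factors swapped, depending on the convention for $\Delta$; the argument is the same either way). By induction, a product of $r$ generators is a matrix coefficient of $M=\bigotimes_{k=1}^r V(\Omega_k)$. The vector is $v^{\Omega_1}_{j_1}\otimes\cdots\otimes v^{\Omega_r}_{j_r}\in L_M:=\bigotimes_k L(\Omega_k)$, and the functional is $\bigotimes_k (v^{\Omega_k}_{i_k})^*$, which lies in the dual lattice $L_M^\vee$ of $\Aform$-valued functionals on $L_M$.

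\emph{Decomposing the lattice.} By Kashiwara's tensor product theorem, $(L_M,B_M)$ with $B_M=\bigotimes_k B(\Omega_k)$ is a crystal basis of $M$. By his structure theorem for crystal bases of finite-dimensional integrable modules, there is a $\Ut$-isomorphism $M\cong\bigoplus_s V(\Lambda_s)$ carrying $L_M$ onto $\bigoplus_s L(\Lambda_s)$. Hence for each summand there are an inclusion $\iota_s:V(\Lambda_s)\to M$ and a projection $\pi_s:M\to V(\Lambda_s)$ with
\[
\iota_s(L(\Lambda_s))\subseteq L_M,\qquad \pi_s(L_M)=L(\Lambda_s),\qquad \pi_s\iota_s=\mathrm{id}.
\]
These are the only properties of the decomposition the argument uses.

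\emph{Inclusion $\OAztG\subseteq\mathcal{B}$.} Fix $\Lambda\in\Pp$. By hypothesis $V(\Lambda)\cong V(\Lambda_s)$ for some $s$ and some $M$ as above. Since $\pi_s$ is a module map,
\[
C^\Lambda_{i,j}=C^M_{(v^\Lambda_i)^*\circ\pi_s,\;\iota_s(v^\Lambda_j)}.
\]
Because $\iota_s(v^\Lambda_j)\in L_M$, it is an $\Aform$-combination of the pure tensors $v^{\Omega_1}_{j_1}\otimes\cdots\otimes v^{\Omega_r}_{j_r}$. Because $(v^\Lambda_i)^*\circ\pi_s$ maps $L_M$ into $\Aform$, it lies in $L_M^\vee$, which is the $\Aform$-span of the functionals $\bigotimes_k (v^{\Omega_k}_{i_k})^*$. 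Here I use that the polarization makes $\{(v^\Omega_i)^*\}$ an $\Aform$-basis of $L(\Omega)^\vee$ dual to the global basis; this needs the standard fact that the global basis is almost orthonormal. Expanding bilinearly writes $C^\Lambda_{i,j}$ as an $\Aform$-combination of products of generators, so it lies in $\mathcal{B}$.

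\emph{Inclusion $\mathcal{B}\subseteq\OAztG$.} Run the same argument backwards. A product of generators is $C^M_{f,v}$ with $f\in L_M^\vee$ and $v\in L_M$. Write
\[
v=\sum_s \iota_s\pi_s v,\qquad\text{so}\qquad C^M_{f,v}=\sum_s C^{\Lambda_s}_{f\circ\iota_s,\;\pi_s v},
\]
where $\pi_s v\in L(\Lambda_s)$ and $f\circ\iota_s\in L(\Lambda_s)^\vee$. Each term is therefore an $\Aform$-combination of the $C^{\Lambda_s}_{k,l}$, so $C^M_{f,v}\in\OAztG$.

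\emph{Main obstacle.} The delicate step is the lattice-compatible decomposition: producing $\iota_s$ and $\pi_s$ that are genuinely $\Aform$-integral in both directions, and identifying the dual lattice with the span of the dual global basis functionals. The plan is to cite Kashiwara's theorem that a crystal basis of an integrable module splits as a direct sum of the $(L(\Lambda),B(\Lambda))$. Then one checks that $\pi_s$ is $\Aform$-integral because $L_M$ is the direct sum of its components, rather than merely containing them.
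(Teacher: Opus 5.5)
Your argument is correct and is the standard one. The paper only cites \cite{DDPa} for this proposition and gives no proof of its own, but your ingredients (the lattice-compatible maps $\iota_s,\pi_s$ from Kashiwara's decomposition of the tensor product crystal lattice, $\Aform$-integrality of the transposed projection on the dual lattice, expansion in tensor products of the functionals $(v^{\Omega}_p)^*$, and multiplicativity of matrix coefficients) are exactly what the paper itself uses, via $T=\pi_{V(\varpi_i)}\circ S$ and $T^{\mathrm{tr}}$, in Case~2 of the proof of Theorem~\ref{thm:tridecomp_exceptional}. One small precision: the $(v^\Omega_i)^*$ are only almost dual to the global basis, since their Gram matrix lies in $I+t\,M_n(\Aform)\subseteq\mathrm{GL}_n(\Aform)$, and this is precisely what makes them an $\Aform$-basis of $L(\Omega)^\vee$, as you indicate.
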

We now prove the main theorem.

\begin{theorem}\label{thm:tridecomp_exceptional}
Let $\g$ be a simple complex Lie algebra of type $G_2$, $F_4$, or $E_8$,
and let $G$ be the connected simply connected complex Lie group with $\mathrm{Lie}(G)=\g$.
Then
 \[\OAztG = A_0\text{-alg}<\RAzp \cup \RAzm>\].
\end{theorem}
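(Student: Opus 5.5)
Since $\varpi_i$ ($i=1,4,8$) is a tensor generator for the representation ring, I would apply Proposition~\ref{prop:R_algebra_gen} with $\mathcal{F}=\{\varpi_i\}$. This reduces the theorem to showing that every matrix coefficient $C^{\varpi_i}_{j,k}$, $1\le j,k\le\dim\varpi_i$, lies in $\mathcal{A}:=A_0\text{-alg}<\RAzp\cup\RAzm>$. The inclusion $\mathcal{A}\subseteq\OAztG$ is automatic, since $\RAzp,\RAzm\subseteq\OAztG$ and $\OAztG$ is an $A_0$-algebra.

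\textbf{Step 1: the case $\wt(v^{\varpi_i}_k)\neq 0$.} By quasi-minusculity, the weight of $v^{\varpi_i}_k$ is then a short root, hence lies in $W\cdot\varpi_i$. Theorem~\ref{DDPm} then supplies dominant $\Lambda,\Gamma$ and a morphism $T\colon V(\Lambda)\otimes V(-w_0.\Gamma)\to V(\varpi_i)$ with $T(L\otimes L)=L(\varpi_i)$ and $T(v^\Lambda_1\otimes v^{-w_0\Gamma}_{\mathrm{low}})=v^{\varpi_i}_k$. Transposing $T$ and expanding the coproduct gives
\[
C^{\varpi_i}_{j,k}=\sum_{p,q} a_{p,q}\,C^{\Lambda}_{p,1}\,C^{-w_0\Gamma}_{q,m},
\]
with coefficients $a_{p,q}\in A_0$, because $T$ preserves lattices. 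The right-hand side lies in $\RAzp\cdot\RAzm$. This is exactly the argument of \cite{DDPa}, which I would reuse verbatim.

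\textbf{Step 2: the weight-zero columns.} Here Theorem~\ref{obs} forbids the same trick, so I would use $V(\varpi_i)\subseteq V(\varpi_i)\otimes V(\varpi_i)$, which has multiplicity one by \cite[Lemma 5A.9.b]{Jan-1996aa}. Take the $U_t$-projection $P\colon V(\varpi_i)\otimes V(\varpi_i)\to V(\varpi_i)$, normalised so that $P(L\otimes L)=L(\varpi_i)$ and $P$ induces at $t=0$ the crystal isomorphism $\mathcal{C}\cong B(\varpi_i)$. By Kashiwara's theory, $P$ induces at $t=0$ the identity on $\mathcal{C}$ and zero on the other components. Hence for a weight-zero global basis vector $v^{\varpi_i}_k$, corresponding to $y=b'\otimes c'\in\mathcal{C}$, we have
\[
v^{\varpi_i}_k \equiv P(v_{b'}\otimes v_{c'})\pmod{tL(\varpi_i)}.
\]
By Lemma~\ref{lem:quasi_min_crystal}(ii), both $v_{b'}$ and $v_{c'}$ have nonzero weight. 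Then
\[
C^{\varpi_i}_{j,P(v_{b'}\otimes v_{c'})}=\sum_{p}(\ast)\,C^{\varpi_i}_{p,b'}\,C^{\varpi_i}_{\cdot,c'}
\]
with $A_0$ coefficients, and each factor lies in $\mathcal{A}$ by Step~1.

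\textbf{Step 3: the error term.} The remaining correction is an element of $t\cdot\operatorname{span}_{A_0}\{C^{\varpi_i}_{j,l}\}$, possibly involving other weight-zero columns. Write $Z$ for the $A_0$-span of the weight-zero coefficients $C^{\varpi_i}_{j,k}$. Collecting terms gives
\[
(1-tM)\,\vec{C}\in\mathcal{A}^{m}
\]
for a matrix $M$ over $A_0$, where $\vec C$ lists the weight-zero coefficients and the nonzero-weight ones are absorbed into $\mathcal{A}$ by Step~1. Since $\det(1-tM)$ is a unit in $A_0$, the matrix $1-tM$ is invertible over $A_0$, so $\vec C\in\mathcal{A}^m$. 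An alternative is a Nakayama-type argument over the local ring $A_0$, valid because everything sits inside a finitely generated $A_0$-module.

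\textbf{Main obstacle.} I expect Step~3 to be the hard point: making precise that the projection $P$ maps $v_{b'}\otimes v_{c'}$ to $v^{\varpi_i}_k$ modulo $tL$, rather than merely up to $tL$ of $V\otimes V$ in other components. This requires checking that $P$ is lattice-preserving with the right normalisation at $t=0$ on the highest weight vector $x$. I would try to obtain this from the compatibility of crystal bases with tensor products and the uniqueness of the summand.
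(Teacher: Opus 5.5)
Your proposal is correct and follows essentially the same route as the paper: you reduce to $\varpi_i$ via Proposition~\ref{prop:R_algebra_gen}, handle the nonzero-weight columns by Theorem~\ref{DDPm}, and handle the weight-zero columns through the multiplicity-one projection $V(\varpi_i)\otimes V(\varpi_i)\to V(\varpi_i)$ together with Lemma~\ref{lem:quasi_min_crystal}(ii). Your invertibility of $1-tM$ over $A_0$ is exactly the paper's Nakayama argument that the images $T(v_{\beta_k}\otimes v_{-\beta_k})$ form an $A_0$-basis of $L(\varpi_i)_0$, and the normalisation you flag as the main obstacle is what the paper takes from Kashiwara's decomposition of $(L\otimes L,B\otimes B)$. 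It can also be settled directly: a lattice-preserving $U_t$-map commutes with the Kashiwara operators, so by multiplicity one its reduction at $t=0$ kills every other component and is a nonzero rational multiple of the crystal isomorphism on $\mathcal{C}$.
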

\begin{proof}
The inclusion $A_0\text{-alg}<\RAzp \cup \RAzm>\subseteq\OAztG$ is immediate from the
definitions.
For the reverse inclusion, it suffices by Proposition \ref{prop:R_algebra_gen} to
show that $C^\Omega_{v^{\Omega}_r,v^{\Omega}_j}\in A_0\text{-alg}<\RAzp \cup \RAzm>$ for all $r,j$ when $\Omega=\varpi_i$
(the quasi-minuscule fundamental weight), as $V(\varpi_i)$ is a tensor
generator.

\medskip

\noindent
\textbf{Case 1: $\wt(v^{\varpi_i}_j)\ne 0$.}
In this case $\wt(v^{\varpi_i}_j)=\omega \cdot\varpi_i$ for some $\omega\in W$,
and Theorem \ref{DDPm} directly gives
$C^{\varpi_i}_{v^{\varpi_{i}}_r,v^{\varpi_{i}}_j}\in\RAzp\cdot\RAzm$ for all $r$.

\medskip\noindent
\textbf{Case 2: $\wt(v^{\varpi_i}_j) = 0$.}
Let $m$ be the multiplicity of the zero weight in $V(\varpi_i)$; we have
$m=1,2,8$ for types $G_2, F_4, E_8$ respectively.
Let $\{v^{\varpi_i}_{1,0},\dots,v^{\varpi_i}_{m,0}\}$ be the global basis vectors
of weight zero in $L(\varpi_i)_0$.
By the general theory of crystal bases, there exists a $\Ut$-module
isomorphism
\[
  S : V(\varpi_i)\otimes V(\varpi_i) \xrightarrow{\;\sim\;}
  \bigoplus_k V(\lambda_k)
\]
that restricts to an
$\Aform$-linear isomorphism on the crystal lattices and the induced map $\tilde{S}$ on the crystal limit sends the product crystal $B(\varpi_i)\otimes B(\varpi_i)$
bijectively onto $\bigsqcup_k B(\lambda_k)$.
Since $V(\varpi_i)$ appears among the summands $V(\lambda_k)$ with multiplicity 1, we define the $\Ut$-module morphism
\[
  T := \pi_{V(\varpi_i)}\circ S :
  V(\varpi_i)\otimes V(\varpi_i)\longrightarrow V(\varpi_i),
\]
where $\pi_{V(\varpi_i)}$ is the projection onto the $V(\varpi_i)$ summand. Then $T$ sends the product crystal lattice onto $L(\varpi_i)$.\\
Let $\mathcal{C}$ denote the connected component of $B(\varpi_i)\otimes
B(\varpi_i)$ corresponding to $V(\varpi_i)$ under $S$.
By Lemma \ref{lem:quasi_min_crystal}, the $m$ weight-zero nodes of $\mathcal{C}$
have the form $b^{\varpi_i}_{\beta_k}\otimes b^{\varpi_i}_{-\beta_k}$ where $\wt(b^{\varpi_i}_{\beta_k})=\beta_k>0$ and
$\wt(b^{\varpi_i}_{-\beta_k})=-\beta_k<0$, for distinct short roots $\beta_1,\dots,\beta_m \in W.\varpi_i$.
Under the crystal isomorphism $\mathcal{C}\cong B(\varpi_i)$ induced by $S$,
these nodes map to the $m$ zero-weight crystal basis elements of $B(\varpi_i)$ bijectively.
Therefore, for each $k=1,\dots,m$:
\[
  T\!\left(v^{\varpi_i}_{\beta_k}\otimes v^{\varpi_i}_{-\beta_k}\right)
  = v^{\varpi_i}_{k,0} + t\cdot x_k
\]
for some $x_k\in L(\varpi_i)_0$.
The collection $\{v^{\varpi_i}_{k,0}+t\cdot x_k : k=1,\dots,m\}$
reduces modulo $tL(\varpi_i)_0$ to a $\mathbb{Q}$-basis of
$L(\varpi_i)_0/tL(\varpi_i)_0$.
By Nakayama's Lemma over the local ring $A_0$, it is an $\Aform$-basis
of $L(\varpi_i)_0$.
In particular, each global basis element $v^{\varpi_i}_{j,0}$ can be written as
\begin{equation}\label{eq:vj0_decomp}
  v^{\varpi_i}_{j,0} = \sum_{k=1}^m a_{jk}\cdot
  \bigl(v^{\varpi_i}_{k,0}+t\cdot x_k\bigr), \quad a_{jk}\in\Aform.
\end{equation}
By linearity, it suffices to show
$C^{\varpi_i}_{v^{\varpi_i}_r,\,v^{\varpi_i}_{k,0}+t\cdot x_k}
\in \RAzp\cdot\RAzm\cdot\RAzp\cdot\RAzm$ for each $k$.

Using the $\Ut$-equivariance of $T$ and the definition of the transpose map
$T^{\mathrm{tr}}$, for any $a\in\Ut$:
\begin{align*}
  C^{\varpi_i}_{v^{\varpi_i}_r,\,v^{\varpi_i}_{k,0}+t\cdot x_k}(a)
  &= \bigl\langle (v^{\varpi_i}_r)^*, a\cdot
    T(v^{\varpi_i}_{\beta_k}\otimes v^{\varpi_i}_{-\beta_k})\bigr\rangle \\
  &= \bigl\langle T^{\mathrm{tr}}(v^{\varpi_i}_r)^*,\,
    a\cdot (v^{\varpi_i}_{\beta_k}\otimes v^{\varpi_i}_{-\beta_k})\bigr\rangle \\
  &= C^{V(\varpi_i)\otimes V(\varpi_i)}_{T^{\mathrm{tr}}(v^{\varpi_i}_r)^*,\,
    v^{\varpi_i}_{\beta_k}\otimes v^{\varpi_i}_{-\beta_k}}(a).
\end{align*}
Since $T$ is lattice-preserving, its transpose $T^{\mathrm{tr}}$ is
$\Aform$-linear on the dual lattice, so
\[
  T^{\mathrm{tr}}(v^{\varpi_i}_r)^* =
  \sum_{p,q} c_{pq}(t)\cdot
  (v^{\varpi_i}_p\otimes v^{\varpi_i}_q)^*, \quad c_{pq}(t)\in\Aform.
\]
Applying the tensor product formula for matrix elements:
\[
  C^{\varpi_i}_{v^{\varpi_i}_r,\,v^{\varpi_i}_{k,0}+t\cdot x_k}
  = \sum_{p,q} c_{pq}(t)\cdot
  C^{\varpi_i}_{v^{\varpi_i}_p,\,v^{\varpi_i}_{\beta_k}}\cdot C^{\varpi_i}_{v^{\varpi_i}_q,\,v^{\varpi_i}_{-\beta_k}}.
\]
Since $\beta_k$ and $-\beta_k$ are non-zero weights lying in $W\cdot\varpi_i$.
Applying Theorem \ref{DDPm}:
\[
  C^{\varpi_i}_{v^{\varpi_i}_p,\,v^{\varpi_i}_{\beta_k}}\in\RAzp\cdot\RAzm
  \quad\text{and}\quad
  C^{\varpi_i}_{v^{\varpi_i}_q,\,v^{\varpi_i}_{-\beta_k}}\in\RAzp\cdot\RAzm
  \quad \text{for all } p,q.
\]
Therefore
\[
  C^{\varpi_i}_{v^{\varpi_i}_r,\,v^{\varpi_i}_{k,0}+t\cdot x_k}
  \in \RAzp\cdot\RAzm\cdot\RAzp\cdot\RAzm.
\]
Combining with \eqref{eq:vj0_decomp} and the fact that $\RAzp$ and $\RAzm$
are $\Aform$-algebras, we conclude
\[
  C^{\varpi_i}_{v^{\varpi_i}_r,\,v^{\varpi_i}_{j,0}}\in \RAzp\cdot\RAzm\cdot\RAzp\cdot\RAzm
  \subseteq A_0\text{-alg}<\RAzp \cup \RAzm>.
\]
In both Case~1 and Case~2, $C^{\varpi_i}_{v^{\varpi_i}_r,v^{\varpi_i}_j} \in A_0\text{-alg}<\RAzp \cup \RAzm>$. Therefore, we must have
$\OAztG\subseteq A_0\text{-alg}<\RAzp \cup \RAzm>$.
\end{proof}

\begin{corollary}\label{cor:tridecomp_all}
The triangular decomposition $\OAztG=A_0\text{-alg}<\RAzp \cup \RAzm>$ holds for every
complex simple Lie algebra $\g$.
\end{corollary}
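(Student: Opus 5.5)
The corollary follows by splitting the simple complex Lie algebras according to the Cartan--Killing classification and citing the relevant result for each piece. Every simple complex Lie algebra is of exactly one of the types $A_n$ ($n\ge1$), $B_n$ ($n\ge2$), $C_n$ ($n\ge3$), $D_n$ ($n\ge4$), $E_6$, $E_7$, $E_8$, $F_4$, or $G_2$. In each case $G$ is the unique connected simply connected group with Lie algebra $\g$, and $\OAztG$, $\RAzp$, $\RAzm$ are defined intrinsically from $\g$ through its crystal bases. So it suffices to verify the equality $\OAztG=A_0\text{-alg}<\RAzp \cup \RAzm>$ type by type.

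For the classical types $A_n,B_n,C_n,D_n$ and for $E_6,E_7$, I will invoke the main theorem of~\cite{DDPa}. That argument takes a minuscule dominant weight $\Omega$ as a tensor generator, or a small set $\mathcal F$ of such weights as needed in types $B_n$ and $D_n$. Since every weight of $V(\Omega)$ is a nonzero Weyl conjugate of $\Omega$, Theorem~\ref{DDPm} applies to every global basis vector of $V(\Omega)$. Proposition~\ref{prop:R_algebra_gen} then shows that $\OAztG$ is generated by $\RAzp\cdot\RAzm$. For $G_2,F_4,E_8$ I will apply Theorem~\ref{thm:tridecomp_exceptional} directly. Together these two families exhaust the classification, which gives the corollary.

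The main point to check is that the earlier results are stated under the same conventions as here: the base field $\Qpt$, the ring $\Aform$, and the normalization of $\RAzp,\RAzm$ fixed in Section~\ref{+- algebras} following~\cite[Section~3.1]{DDPa}. These conventions were imported verbatim, so I expect this check to be routine.

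A second point is whether the low-rank coincidences $B_2\cong C_2$ and $D_3\cong A_3$ leave any gap. They do not, because every type is covered by~\cite{DDPa} or by Theorem~\ref{thm:tridecomp_exceptional} regardless of how it is labeled.

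Finally, the uniform $\Aform$-algebra inclusion $A_0\text{-alg}<\RAzp \cup \RAzm>\subseteq\OAztG$ holds in all types. It follows because $\RAzp$ and $\RAzm$ lie in $\OAztG$ and $\OAztG$ is closed under multiplication, by the tensor-product formula for matrix coefficients together with the fact that tensor products of crystal lattices decompose into crystal lattices.
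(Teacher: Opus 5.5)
Your proposal is correct and is essentially the paper's proof: the paper also combines \cite[Theorem~3.11]{DDPa} for types $A_n,B_n,C_n,D_n,E_6,E_7$ with Theorem~\ref{thm:tridecomp_exceptional} for $G_2,F_4,E_8$, and these cases exhaust the classification. (A harmless aside on your description of the cited argument: in type $B_n$ the spin weight $\varpi_n$ alone is already a minuscule tensor generator, so only $D_n$ with $n$ even genuinely requires a set $\mathcal F$; nothing in your argument depends on this.)
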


\begin{proof}
The cases $A_n$, $B_n$, $C_n$, $D_n$, $E_6$, $E_7$ are
\cite[Theorem~3.11]{DDPa}, and the cases $G_2$, $F_4$, $E_8$ are Theorem
\ref{thm:tridecomp_exceptional} above.
Together, these exhaust all complex simple Lie algebras.
\end{proof}

\section{Consequences}\label{sec:consequences}

\subsection{The inclusion $\OAztG \;\subseteq\; \OAztK$.}
Recall that $\OAztK$ is the $\Aform$-algebra generated
by $R^{A_0}_+$ and $(R^{A_0}_+)^* = \tilde{R}^{A_0}_-$. 
Using Corollary \ref{cor:tridecomp_all} together with the definition of $\tilde{R}^{A_0}_-$, one immediately deduces the
following inclusion conjectured by Matassa and Yuncken,
(see~\cite[Remark~3.4]{MY23}) in complete generality.

\begin{corollary}\label{cor:MY_conjecture}
Let $\g$ be any complex simple Lie algebra and $G$ the corresponding
connected simply connected complex Lie group with compact real form $K$. Then
\[
  \OAztG \;\subseteq\; \OAztK.
\]
Equivalently, $\OAztK$ is the $\ast$-algebra $\widehat{O}^{A_0}_t(G)$
generated by $\OAztG$.
\end{corollary}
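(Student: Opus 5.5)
The inclusion should follow from Corollary \ref{cor:tridecomp_all} once we show that both $\RAzp$ and $\RAzm$ lie in $\OAztK$. By Corollary \ref{cor:tridecomp_all}, $\OAztG$ is the $\Aform$-algebra generated by $\RAzp\cup\RAzm$. Since $\OAztK$ is an $\Aform$-algebra, it suffices to prove $\RAzp\subseteq\OAztK$ and $\RAzm\subseteq\OAztK$. The first inclusion holds by definition, because $\OAztK$ is generated by $\RAzp$ and $(\RAzp)^*$.

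For the second inclusion I would use the relation recorded in \Cref{+- algebras}: $(\RAzp)^*=\tilde{R}^{A_0}_-$ and $\RAzm\subseteq\tilde{R}^{A_0}_-$. The containment is seen directly on generators. For each $\Lambda$ and $i$, the weight $\wt(v^\Lambda_i)$ lies between $w_0\Lambda$ and $\Lambda$, so $\wt(v^\Lambda_i)-w_0\Lambda$ is a nonnegative integer combination of simple roots. Hence the exponent $N=(w_0\Lambda-\wt(v^\Lambda_i),\rho)$ is a nonpositive integer. We can then write
\[
C^\Lambda_{i,m_\Lambda}=t^{-N}\cdot t^{N}C^\Lambda_{i,m_\Lambda},
\]
where $t^{-N}\in\Aform$ because $-N\ge 0$. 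This exhibits each generator of $\RAzm$ as an $\Aform$-multiple of a generator of $\tilde{R}^{A_0}_-$. Combining the two inclusions gives
\[
\RAzm\subseteq\tilde{R}^{A_0}_-=(\RAzp)^*\subseteq\OAztK.
\]
The only point needing care is the sign convention in the exponent. If the pairing with $\rho$ were normalized so that $N\ge0$, the claimed containment would fail, so I would recheck against \cite{DDPa} that $(\alpha_j,\rho)>0$ with the paper's form.

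For the equivalence statement, let $\widehat{O}^{A_0}_t(G)$ denote the $\ast$-algebra generated by $\OAztG$. First, $\OAztK\subseteq\widehat{O}^{A_0}_t(G)$: we have $\RAzp\subseteq\OAztG$, and $(\RAzp)^*$ lies in the $\ast$-closure of $\OAztG$. Conversely, I would check that $\OAztK$ is $\ast$-stable. This holds because it is generated by $\RAzp\cup(\RAzp)^*$, a set closed under $\ast$ since $\ast$ is an involutive antimultiplicative conjugate-linear map. Being $\ast$-stable and containing $\OAztG$ by the first part, $\OAztK$ contains $\widehat{O}^{A_0}_t(G)$, which gives equality.

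The main obstacle is making sure that $\ast$ preserves $\Aform$-structures, so that $\ast$-closure over $\Aform$ makes sense. Since $\Aform\subset\Qpt$ and $t$ is real ($t^*=t$), the conjugation fixes $\Aform$, and this should be routine.
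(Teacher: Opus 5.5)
Your proposal is correct and follows the same route as the paper. Both arguments combine the triangular decomposition (Corollary~\ref{cor:tridecomp_all}) with $(\RAzp)^*=\tilde{R}^{A_0}_-$ and $\RAzm\subseteq\tilde{R}^{A_0}_-$. Your sign check $(\alpha_j,\rho)=d_j>0$, which makes the exponent nonpositive, and your $\ast$-stability argument for the ``equivalently'' clause fill in details the paper leaves implicit.
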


The equality $\OAztK = \widehat{O}^{A_0}_t(G)$ (i.e.\ $\OAztK$ is the
$\ast$-algebra generated by $\OAztG$) is used in a critical way in the
proof of the compact quantum semigroup structure below, as well as to
compare the two different crystallization approaches for type $A_n$ simple Lie algebras
in~\cite[Section~5]{DDPa}.

\subsection{The crystallized algebra $C(K_0)$} Given any $a\in \mathcal{O}_t(K)$, it is possible to assign an element $\vartheta_q(a)\in \mathcal{O}(K_q)$, for sufficiently small $q>0$ via a process called specialization. Furthermore, this assignment is compatible with the Hopf structures in the following sense. Let $a$ and $b$ be two elements of
$\mathcal{O}_t(K)$, then one has, for a small enough $\delta>0$
and for $q\in (0,\delta)$,
\begin{IEEEeqnarray*}{rClrClrCl}
\vartheta_{q}(ab) &=& \vartheta_{q}(a)\vartheta_{q}(b),\qquad &
\vartheta_{q}(a+b) &=& \vartheta_{q}(a)+\vartheta_{q}(b),\qquad
& \vartheta_{q}(a^{*}) &=& (\vartheta_{q}(a))^{*},\\ (\vartheta_{q}\otimes \vartheta_{q})\circ \Delta_{t}(a) 
    &=& \Delta_{q}(\vartheta_{q}(a)),
    & ev_{q}\circ \epsilon_{t}(a) &=& \epsilon_{q}\circ \vartheta_{q}(a).
     &&&
\end{IEEEeqnarray*}
Recall that for any $q\in (0,1)$, $\mathcal{O}(K_q)$ is equipped with a structure of a CQG algebra in the sense of Dijkhuizen and Koornwinder \cite{DK94}. Therefore, using subsequent results of \cite{DK94}, it follows that $\mathcal{O}(K_q)$ is injectively embedded inside its universal $C^*$-completion $C(K_q)$. On $C(K_q)$, we then have the faithful (Soibelman) representation $\pi^{(q)}_{Soi}:=\int_{\mathbb{T}}^{\oplus}\pi^{(q)}_{w_0, t}\;dt$. Here, the direct integral is taken over the maximal torus $\mathbb{T}\subset K$ of all the irreducible representations of $C(K_q)$ corresponding to the longest word $w_0$ of the Weyl group $W$ of $K$. $dt$ stands for the normalized Haar measure on $\mathbb{T}$. For more details about the specialization map $\vartheta_q$ and the Soibelman representation $\pi^{(q)}_{Soi}$, we refer to section 4 of \cite{DDPa}.\\
The \textit{crystallized algebra} $\CpKo$ of Matassa and Yuncken~\cite{MY23} is
the $C^*$-subalgebra of $\mathcal{B}(\HSoi)$ generated by the operators
$\{\lim_{q\to 0^+}\pi_{Soi}^{(q)}\circ\vartheta_q(a) : a\in\OAztK\}$. 

\begin{remark}
\label{rem:CMY_all}
The following gives more clarity about what makes the Matassa-Yuncken crystallization a truly genuine choice of a $C^*$-algebra to be called the crystal limit of quantized compact Lie groups $C(K_q)$.
\begin{align*}
  C(K_0) &= C^*\text{-subalgebra of }\mathcal{B}(\HSoi)\text{ generated by }
     \Bigl\{\lim_{q\to 0^+}\psiqSoi\circ\vartheta_q(a)
     : a\in\widehat{O}^{A_0}_t(G)\Bigr\} \\
  &= C^*\text{-subalgebra of }\mathcal{B}(\HSoi)\text{ generated by }
     \Bigl\{\lim_{q\to 0^+}\psiqSoi\circ\vartheta_q(a)
     : a\in\OAztG\Bigr\}.
\end{align*}
\end{remark}

\subsection{Compact quantum semigroup structure}
\begin{corollary}\label{cor:compact_qsg}
Let $\g$ be any complex simple Lie algebra and $K$ the corresponding
connected simply connected compact Lie group with $\text{Lie}(K)_{\mathbb{C}}=\mathfrak{g}$.
Then the comultiplication $\Delta_t$ and counit $\varepsilon_t$ of $\OAztK$
induce unital $\ast$-homomorphisms
\[
  \Delta : \CpKo \longrightarrow \CpKo\otimes_{min}\CpKo
  \qquad\text{and}\qquad
  \varepsilon : \CpKo \longrightarrow \mathbb{C}
\]
making $\CpKo$ a compact quantum semigroup. Furthermore, $C(K_0)$ admits a natural bi-invariant (Haar) state. 
\end{corollary}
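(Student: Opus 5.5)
The plan is to build the structure on the dense $\ast$-subalgebra $\mathcal{A}_0$ first and then extend to $\CpKo$ by continuity. Here $\mathcal{A}_0$ denotes the image of $\OAztK$ under the crystal limit map $a\mapsto \pi_0(a):=\lim_{q\to 0^+}\pi^{(q)}_{Soi}\circ\vartheta_q(a)$. This map is an algebra $\ast$-homomorphism $\OAztK\to\mathcal{B}(\HSoi)$, since $\vartheta_q$ and $\pi^{(q)}_{Soi}$ are $\ast$-homomorphisms and the limits exist on $\OAztK$ by the construction of \cite{MY23}. It kills $t\,\OAztK$. By Corollary \ref{cor:MY_conjecture}, $\OAztK$ is the $\ast$-algebra generated by $\OAztG$.

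\textbf{Step 1: the coproduct preserves the lattice.} I first show $\Delta_t(\OAztK)\subseteq \OAztK\otimes_{\Aform}\OAztK$. On the generators $C^\Lambda_{i,j}$ of $\OAztG$ one has $\Delta_t C^\Lambda_{i,j}=\sum_k C^\Lambda_{i,k}\otimes C^\Lambda_{k,j}$, with coefficients in $\Aform$. Because $\Delta_t$ is a $\ast$-homomorphism, the same holds for the $\ast$-algebra generated by $\OAztG$, which is $\OAztK$ by Corollary \ref{cor:MY_conjecture}. This is exactly where the triangular decomposition is used: without the inclusion one only knows $\OAztK$ is generated by $\RAzp$ and its adjoint, and $\Delta_t$ of $C^\Lambda_{i,1}$ involves $C^\Lambda_{k,1}\otimes$-terms with $C^\Lambda_{i,k}$ for general $k$, which need not lie in $\OAztK$. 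Similarly, $\varepsilon_t$ maps $\OAztK$ into $\Aform$.

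\textbf{Step 2: pass to the limit.} Next I define $\Delta(\pi_0(a)):=\sum \pi_0(a_{(1)})\otimes\pi_0(a_{(2)})$ and $\varepsilon(\pi_0(a)):=ev_0(\varepsilon_t(a))$. Well-definedness and boundedness are the main obstacle. The approach I would take is to use the compatibility $(\vartheta_q\otimes\vartheta_q)\Delta_t=\Delta_q\vartheta_q$ together with the fact that $\Delta_q$ extends to a $\ast$-homomorphism $C(K_q)\to C(K_q)\otimes_{min}C(K_q)$, which is therefore contractive. This gives
\[
\Bigl\|\sum\pi^{(q)}_{Soi}\vartheta_q(a_{(1)})\otimes\pi^{(q)}_{Soi}\vartheta_q(a_{(2)})\Bigr\|\le\|\pi^{(q)}_{Soi}\vartheta_q(a)\|,
\]
where faithfulness of $\pi^{(q)}_{Soi}$ identifies the norms. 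Letting $q\to0^+$ and using norm convergence of each term (this convergence should be checked from \cite{MY23}, or reduced to the generators) yields $\|\Delta(\pi_0(a))\|\le\|\pi_0(a)\|$. Hence $\Delta$ is well defined and extends by continuity to a unital $\ast$-homomorphism into $\CpKo\otimes_{min}\CpKo$. Coassociativity follows from that of $\Delta_t$ on the dense subalgebra. For $\varepsilon$, I get $|\varepsilon_q\vartheta_q(a)|\le\|\vartheta_q(a)\|$ and take limits in the same way. So $\CpKo$ is a compact quantum semigroup; counit identities are not even needed for that notion.

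\textbf{Step 3: the Haar state.} I take Haar states $h_q$ on $C(K_q)$ and define $h(\pi_0(a)):=\lim_{q\to0}h_q(\vartheta_q(a))$ on $\mathcal{A}_0$. On matrix coefficients, $h_t(C^\Lambda_{i,j})=0$ for $\Lambda\ne0$, so $h$ is computable: it is the constant-term functional. Positivity and $|h(x)|\le\|x\|$ pass to the limit by the same specialization argument, so $h$ extends to a state. Bi-invariance $(h\otimes\mathrm{id})\Delta=h(\cdot)1=(\mathrm{id}\otimes h)\Delta$ holds for each $q$ and passes to the limit on $\mathcal{A}_0$, then on $\CpKo$ by density. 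Uniqueness follows formally: if $h'$ is another bi-invariant state, then $h'=(h\otimes h')\Delta=h$.
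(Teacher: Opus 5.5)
Your argument follows the paper's route throughout: lattice preservation of $\Delta_t$ via Corollary~\ref{cor:MY_conjecture}, the contraction estimate through $\Delta_q$ and the faithful Soibelman representation, the Haar state as a limit of $h_q\circ\vartheta_q$, bi-invariance by passing to the limit, and uniqueness formally. However, Step~3 omits the one thing the paper actually proves there: that $\lim_{q\to0^+}h_q(\vartheta_q(a))$ exists for every $a\in\OAztK$. You define $h(\pi_0(a))$ as this limit and then say positivity and $|h(x)|\le\|x\|$ ``pass to the limit'', which presupposes it.

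Neither of your two remarks supplies the existence on its own. Identifying $h_t$ with the constant-term functional only shows that $h_q(\vartheta_q(a))$ is the value at $t=q$ of a fixed rational function $h_t(a)$. That function would be regular at $t=0$ if $a$ lay in the $\Aform$-span of the $C^\Lambda_{i,j}$, but $\OAztK$ is strictly bigger. It contains $\tilde{R}^{A_0}_-=(\RAzp)^*$, whose generators $t^{(w_0\Lambda-\wt(v^\Lambda_i),\rho)}C^\Lambda_{i,m_\Lambda}$ carry a \emph{nonpositive} power of $t$. So the constant term of a product such as $t^{(w_0\Lambda-\wt(v^\Lambda_i),\rho)}C^\Lambda_{i,m_\Lambda}\cdot C^{\Lambda'}_{p,r}$ is a priori only an element of $\mathbb{Q}(t)$ that may have a pole at $t=0$. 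On the other side, the estimate $|h_q(\vartheta_q(a))|\le\|\pi^{(q)}_{Soi}\vartheta_q(a)\|$ gives only boundedness as $q\to0^+$, not convergence.

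The fix is to combine the two, as the paper does. The map $q\mapsto h_q(\vartheta_q(a))$ is a rational function of $q$, and it stays bounded on some interval $(0,\delta)$ because $\|\pi^{(q)}_{Soi}\vartheta_q(a)\|\to\|\pi_0(a)\|$. A rational function bounded near $0^+$ has no pole at $0$, so it converges as $q\to0^+$; equivalently, $h_t(a)$ is regular at $t=0$ for all $a\in\OAztK$. With existence in hand, the same estimate gives well-definedness on $\pi_0(\OAztK)$, and your positivity, bi-invariance and uniqueness arguments go through unchanged.

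A minor point in Step~1: $(v^\Lambda_i)^*=(v^\Lambda_i,-)$ is the polarization functional, not the dual-basis functional. So one actually has $\Delta_t C^\Lambda_{i,j}=\sum_{k,l}(G^{-1})_{kl}\,C^\Lambda_{i,k}\otimes C^\Lambda_{l,j}$, with $G$ the Gram matrix of the global basis. Since $G\equiv I$ modulo $t$, the coefficients still lie in $\Aform$ and your conclusion stands.
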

A somewhat ad hoc proof of the quantum semigroup structure without the existence of the Haar state for $C(K_0)$ is given in \cite{DDPa} by exploiting the generators and relations of $C(K_0)$ introduced by Matassa and Yuncken. Here, we present a more analytic and self-contained proof.
\begin{proof}
From corollary \ref{cor:MY_conjecture}, it is easy to see that for any $a\in \OAztK$, $\Delta_t(a)$ is a finite
sum with summands in $\OAztK$, and therefore $\lim_{q\to 0+}[(\pi^{(q)}_{Soi}\circ \vartheta_q) \otimes (\pi^{(q)}_{Soi}\circ \vartheta_q)]\circ \Delta_t(a)$ is a well-defined element in $\mathcal{O}(K_0)\otimes \mathcal{O}(K_0)$. Write $\pi_0(a)$ for the operator $\lim_{q\to0+} \pi^{(q)}_{Soi}\circ \vartheta_q(a)$ and define, 
\begin{align*}
\Delta_0(\pi_0(a)):= \lim_{q\to 0+}[(\pi^{(q)}_{Soi}\circ \vartheta_q) \otimes (\pi^{(q)}_{Soi}\circ \vartheta_q)]\circ \Delta_t(a) =  \lim_{q\to 0+}[(\pi^{(q)}_{Soi}\otimes \pi^{(q)}_{Soi})\circ \Delta_q]\circ \vartheta_q(a).
\end{align*}
The latter equality follows from compatibility of $\vartheta_q$ with the co-algebra structures. To check that $\Delta_0$ is well-defined, it is enough to observe that
\begin{align*}
    ||\Delta_0(\pi_0(a))||\leq \lim_{q\to0+}||\vartheta_q(a)||_{\infty}= \lim_{q\to0+}||\pi^{(q)}_{Soi}\circ\vartheta_q(a)||=||\pi_0(a)||. 
\end{align*}
Here, $||.||_{\infty}$ stands for the universal $C^{*}$-norm on $\mathcal{O}(K_q)\subseteq C(K_q)$ and the latter equality comes from the faithfulness of the Soibelman representation $\pi^{(q)}_{Soi}$. $\Delta_0$ can be uniquely extended to $C(K_0)$ as a unital $*$-homomorphism from $C(K_0)\to C(K_0)\otimes_{min} C(K_0)$. Coassotiativity of $\Delta_0$ follows from coassotiativity of $\Delta_t$ after passing through the limit. Similarly, one can prove the existence of a counit on $C(K_0)$. Therefore, $C(K_0)$ admits a structure of a compact quantum semigroup.\\
Now, we proceed to prove the existence of the Haar state on $C(K_0)$. For any $a\in \mathcal{O}^{A_0}_t(K)$, let us first prove that $\lim_{q\to 0+} h_q(\vartheta_q(a))$ exists. Using the definition of $\vartheta_q$ and properties of $h_q$ $(h_q(1)=1,\; h_q(C^\Lambda_{f,v})=0,\; 0\neq \Lambda\in P_+)$, it follows that for sufficiently small q, 
\begin{align*}
        h_q(\vartheta_q(a))= \frac{p_1(t)}{p_2(t)}|_{t=q}, \quad p_1(t), p_2(t)\in \mathbb{Q}[t].
\end{align*}
Also, $h_q$ being a state, forces to have
\begin{align}
     |h_q(\vartheta_q(a))|\leq ||\vartheta_q(a)||_{\infty}=||\pi^{(q)}_{Soi}\circ \vartheta_q(a)||. 
\end{align}
Therefore, $\lim_{q\to 0+} |h_q(\vartheta_q(a))|$ is finite; it again follows that $\lim_{q\to 0+} h_q(\vartheta_q(a))$ is also finite. Define, 
\begin{align*}
     h_0(\pi_0(a)):= \lim_{q\to 0+} h_q(\vartheta_q(a)).
\end{align*} 
From (3), one can easily deduce that $h_0$ is well defined on $\mathcal{O}(K_0)$, and $h_0$ can be further uniquely extended to $C(K_0)$ as a state. Bi-invariance of $h_0$:
\begin{align*}
     (id\otimes h_0)\Delta_0(X)=h_0(X).\;Id_{H_{Soi}}=(h_0\otimes id)\Delta_0(X)
\end{align*}
for any $X\in C(K_0)$, follows from the bi-invariance of $h_q$ after passing through the limit and the uniqueness of $h_0$ is a straightforward consequence of bi-invariance. 
\end{proof}
It is also worthwhile to observe that, unlike in the case for $C(K_q)$, the Haar state on $C(K_0)$ may not be faithful. For instance, take $K=SU(2)$, it follows from the formula of Haar state for $C(SU_q(2))$ (see Appendix A.1 \cite{Wor87}) that $h_0(\pi_0(\alpha)^*\pi_0(\alpha))=0$ despite $\pi_0(\alpha)\neq 0$. 
\begin{remark}
Although all the main results, including Corollary \ref{cor:tridecomp_all}, Corollary \ref{cor:MY_conjecture}, and Corollary \ref{cor:compact_qsg}, are stated for simple Lie algebras, it should be noted that these continue to be valid for the semisimple case as well. 
\end{remark}

\textbf{Acknowledgment}.
The author is in debt to his thesis supervisor, Prof.~Arup K. Pal, for his continuous support, numerous helpful insights, and hour-long discussions. The author is grateful to Prof. Bipul Saurabh for discussions during the Ganit Symposium on Quantum groups at IIT Gandhinagar and for encouraging me to settle the $G_2$ case first, which eventually led to the proof for the remaining types. He also acknowledges Prof. Jyotishman Bhowmick and Bappa Ghosh for the discussions and for hosting him at ISI Calcutta.


\end{document}